\newcommand{\N}{\mathbb N}
\newcommand{\RR}{{{\rm I} \kern -.15em {\rm R} }}
\begin{document}
	\theoremstyle{plain} \newtheorem{thm}{Theorem}[section] \newtheorem{cor}[thm]{Corollary} \newtheorem{lem}[thm]{Lemma} \newtheorem{prop}[thm]{Proposition} \theoremstyle{definition} \newtheorem{defn}{Definition}[section] 
	
	\newtheorem{oss}[thm]{Remark}
	\newtheorem{ex}{Example}[section]
	\newtheorem{lemma}{Lemma}[section]
	\title{A note on the Cucker-Smale model with time delay and communication failures}
	\author{Elisa Continelli\footnote{Email: elisa.continelli@math.unipd.it. } \\Dipartimento di Matematica "Tullio Levi-Civita",\\
		Universit\`{a} degli Studi di Padova,\\
		Via Trieste, 63, 35121, Padova, Italy}

	\maketitle

	\begin{abstract}
		In this paper, we deal with a Cucker-Smale model with time-dependent time delay and communication failures. Namely, we investigate the situation in which the agents involved in a flocking process can possibly suspend the exchange of information among themselves at some times. Under a so-called Persistence Excitation Condition, we establish the exponential flocking for the considered model. The exponential decay estimates on the velocity diameters we obtain are independent of the number of agents, which is convenient especially when the number of agents becomes too large. Exponential decay estimates with decay rate depending on the number of agents appear instead in the case of pair-dependent time delays and non-universal interaction, i.e. not all the agents are able to exchange information among themselves. In this paper, we rather consider a universal interaction and the time delay functions do not depend on the pair of agents. The analysis is then extended to a model with distributed time delay, namely the time delay is not pointwise but the agents are influenced by the information received from the other components of the system in a certain time interval.
	\end{abstract}

	\providecommand{\keywords}[1]{\textbf{Keywords:} #1}
	\keywords{flocking models; communication failures; time delay; universal interaction.}
	
	\vspace{5 mm}
	
	\section{Introduction}
	The study of multiagent systems is a widely exploited topic in the recent literature, by virtue of their application to several scientific fields, among them biology \cite{Cama, CS1}, economics \cite{Marsan}, robotics \cite{Bullo, Jad}, control theory \cite{Aydogdu, Borzi, WCB, PRT, Piccoli}, social sciences \cite{Bellomo, Campi, CF}. 
	
	Among them, there is the Hegselmann-Krause opinion formation model, formulated by Hegselmann and Krause in 2002. In the pioneering work \cite{HK} due to Hegselmann and Krause, the approach to consensus for a group of interacting individuals was investigated. In 2007, the second-order version of the Hegselmann-Krause model was introduced by Cucker and Smale in \cite{CS1} for the description of flocking phenomena (for instance, flocking of birds, schooling of fish or swarming of bacteria). Since then, several generalizations have been proposed (see, among others, \cite{Posneg,CFT2,CFT}). Typically, for solutions to the two aforementioned models, their asymptotic behavior (consensus for the first-order model or flocking for the second-order model) is analyzed. In particular, solutions to the classical formulations of the Hegselmann-Krause model and the Cucker-Smale model achieve the asymptotic consensus due to simmetry reasons.
	
	Later on, to make these models more suitable for describing real life phenomena, time delay effects have been introduced in such systems. The presence of time delays, even arbitrarily small, destroys the typical features of the original models. In particular, the symmetry is broken and it is not possible to argue as in the undelayed case, deducing the asymptotic consensus via simmmetry properties. So, time delays make the problems more difficult to deal with.
	
	The asymptotic behavior of solutions to first and second-order Cucker-Smale models in presence of a time delay (delay that can be constant or, more realistically, dependent of time), has been developed in many works \cite{LW, CH, CL, CP, PT, HM, DH, Lu, CPP, P, H}. Most of them require a smallness condition on the time delay size in order to prove the asymptotic consensus for the Hegselmann-Krause model and its second-order version. However, in the recent work \cite{Cartabia}, the asymptotic flocking for the Cucker-Smale model with constant time delay has been established without requiring any smallness assumptions on the time delay size. Upper bounds on the time delay size are not required either in \cite{H3,PR}. The result in \cite{Cartabia} has been then extended in \cite{Cont, ContPign}, where the exponential consensus for both the first and second-order model in presence of time-dependent time delays have been proven  without assuming any restrictions on the time delay size. Moreover, in \cite{Cont,ContPign}, no monotonicity properties are required on the influence function. Concerning restrictions on the time delay size, an upper bound still remains necessary for models including self-delay (see \cite{Haskovec_JMAA}). 
	
	A growing interest in these last years has been devoted to the analysis of first and second-order Cucker-Smale models under communication failures (see \cite{Caravenna, Rossi, AnconaRossi, Bonnet} for the undelayed case and \cite{CicoContPi,ContPi} for the delayed case). Namely, the agents involved in the opinion formation or flocking process can suspend their interactions at some times. This is modeled by including in the formulation of first and second-order Cucker-Smale models pair-dependent weight functions that can possibly degenerate at certain times. Definitely, the lack of connection among the system's agents prevents the opinion or velocity alignment. Then, it is important to find conditions ensuring that the system reaches the asymptotic consensus despite the lacking interaction among the agents.
	
	In \cite{CicoContPi}, a very general setting is investigated since the possible lack of connection among the agents is addressed by considering pair and time-dependent time delays and a non-universal interaction, namely there could particles that are never able to communicate. Non-universal interaction among the agents is also considered in \cite{CCCP,COP,CicolaniPignotti}. Although the result established in \cite{CicoContPi} for the first-order model is more general with respect to the one in \cite{ContPi}, where the time delay and weight functions are not pair-dependent and the interaction is universal, the decay rate in the exponential decay estimate provided in \cite{CicoContPi} depends on the number of agents, what did not happen in \cite{ContPi}. Finding decay rates that do not depend on the number of agents is rather important, especially when the number of agents becomes too large. Indeed, in \cite{ContPi} the fact that the decay rate is independent of the number of agents allows to extend the exponential consensus result to the associated PDE model, obtained as the mean field limit of the particle system when $N\to \infty$ (cf. \cite{Canizo,HaTadmor}), by using the same techniques employed in \cite{PPpreprint,CPP}. Let us also mention \cite{CH} for an exponential flocking result for the kinetic model associated with the Cucker-Smale system in presence of time delays.

	In this paper, we extend the result in \cite{ContPi} to the second-order model, namely we establish exponential flocking for the Cucker-Smale model with possible loss of connection and time-dependent time delay, obtaining a decay rate that does not depend on the number of agents. This complements the analysis in \cite{CicoContPi} for the second-order model. Indeed, the flocking result we will prove shows that, in the case of universal interaction and not pair-dependent weight and time delay functions, the decay rate provided in \cite{CicoContPi} can be chosen independently of $N$. Decay estimates independent of the number of agents are needed to extend the flocking result to the associated continuum model, obtained as mean field limit of the particle system when $N$ goes to infinity (cf. \cite{CH}).

	The rest of the paper is organized as follows. In Section \ref{timedelay} we formulate our model. We give some definitions, in particular the definition of position and velocity diameters and asymptotic flocking for solutions to the Cucker-Smale model. Then, we state our main result, ensuring that solutions to the Cucker-Smale model with time delay and communication failures exhibit exponential flocking with decay rate independent of the number of agents. In Section \ref{main}, we give some preliminary results that will be used to prove the exponential flocking. We find suitable estimates on the velocity diameters that do not depend on the number of agents and we prove our flocking result. Finally, in Section \ref{distributed} the analysis is extended to a model with distributed time delay, namely each agent changes its velocity depending on  the information received from the other	agents on a certain time interval (cf. \cite{P,CP}). 

	\section{Problem formulation}\label{timedelay}
	Consider a finite set of $N\in\N$ agents, with $N\geq 2 $. Let $x_{i}(t)\in \RR^d$ and $v_{i}(t)\in \RR^d$ denote the position and the velocity of the $i$-th particle at time $t$, respectively. We shall denote with $\lvert\cdot \rvert$ and $\langle\cdot,\cdot\rangle$ the usual norm and scalar product on $\RR^{d}$, respectively. Moreover, let us denote with $\N_0:=\N\cup\{0\}.$ We assume that the agents interact among themselves according to the following Cucker-Smale type model:
	\begin{equation}\label{onoff}
		\begin{cases}
			\frac{d}{dt}x_{i}(t)=v_{i}(t),\quad &t>0, \,\,\forall i=1,\dots,N,\\\frac{d}{dt}v_{i}(t)=\underset{j:j\neq i}{\sum}\alpha(t) b_{ij}(t)(x_{j}(t-\tau(t))-x_{i}(t)),\quad &t>0,	\,\, \forall i=1,\dots,N,
		\end{cases}
	\end{equation}
	where the time delay function $\tau:[0,+\infty)\rightarrow[0,+\infty)$ is continuous and satisfies 
	\begin{equation}\label{taubounded}
		0\leq \tau(t)\leq \bar\tau,\quad \forall t\geq 0,
	\end{equation}
	for some positive constant $\bar\tau$.
	\\Here, the communication rates $b_{ij}$ are of the form
	\begin{equation}\label{weight}
		b_{ij}(t):=\frac{1}{N-1}\psi(\lvert x_{i}(t)-x_{j}(t-\tau(t))\rvert), \quad\forall t>0,\, \forall i,j=1,\dots,N,
	\end{equation}
	where the influence function $\psi:\RR\rightarrow \RR$ is positive, bounded and continuous and  
	\begin{equation}\label{K}
		K:=\lVert \psi\rVert_{\infty}.
	\end{equation} 
	The weight function $\alpha:[0,+\infty)\rightarrow [0,1]$ is a $\mathcal{L}^1$-measurable function satisfying the following Persistence Excitation Condition (cf. \cite{AnconaRossi, CicoContPi, Bonnet}):
	\begin{itemize}
		\item [\bf (PE)]
		there exist two positive constants $T$ and $\tilde{\alpha}$ such that
		\begin{equation}\label{PE}
			\int_{t}^{t+T}\alpha(s)ds\geq \tilde{\alpha},\quad \forall t\geq 0,
		\end{equation}
		Without loss of generality, we can assume that $T\geq \bar\tau$. 
	\end{itemize}
	Let us note that \eqref{PE} becomes relevant when $T$ is large and $\tilde{\alpha}$ is small. In this case, the agents could possibly suspend their interaction for long enough.  We also point out that, in the case in which $\alpha(t)=1$, for a.e. $t\geq 0$, i.e. in the case in which the agents do not interrupt their exchange of information, the condition \eqref{PE} is of course satisfied.
	
	Due to the presence of time delays, the initial conditions are functions defined in the negative time interval $[-\bar\tau, 0].$ The initial conditions
	\begin{equation}\label{incond}
		x_{i}(s)=x^{0}_{i}(s),\quad v_{i}(s)=v^{0}_{i}(s), \quad  \forall s\in [-\bar\tau,0],\,\forall i=1,\dots,N,
	\end{equation}
	are assumed to be continuous functions.
	
	For existence result related to the delayed system \eqref{onoff}, we refer to \cite{Halanay, HL}. Here, we are interested in investigating the asymptotic behavior of solutions to \eqref{onoff}.
	
Now, we give the rigorous definition of asymptotic flocking for solutions to the Cucker-Smale model \eqref{onoff}. To this aim, we define the space diameter $d_X(\cdot)$ and the velocity diameter $d_V(\cdot)$ as follows $$d_{X}(t):=\max_{i,j=1,\dots,N}\lvert x_{i}(t)-x_{j}(t)\rvert,\quad \forall t\geq -\bar\tau$$
$$d_{V}(t):=\max_{i,j=1,\dots,N}\lvert v_{i}(t)-v_{j}(t)\rvert,\quad \forall t\geq -\bar\tau.$$
\begin{defn} \label{unflock} A solution $\{(x_{i},v_{i})\}_{i=1,\dots,N}$ to system \eqref{onoff} exhibits \textit{asymptotic flocking} if the two following conditions are satisfied:
	\begin{enumerate}
		\item there exists a positive constant $d^{*}$ such that$$\sup_{t\geq-{\bar\tau}}d_{X}(t)\leq d^{*};$$
		\item$\underset{t \to+\infty}{\lim}d_{V}(t)=0.$
	\end{enumerate}
\end{defn}
Our main result is the following.
\begin{thm} \label{uf}
	Assume \eqref{taubounded}. Let $\psi:\RR\rightarrow\RR$ be a positive, bounded, continuous function that satisfies
	\begin{equation}\label{infint}
		\int_{0}^{+\infty}\min_{r\in [0,x]}\psi(r)dx=+\infty.
	\end{equation}
	Assume that the weight function $\alpha:[0,+\infty)\rightarrow[0,1]$ is a ${\mathcal L^1}$-measurable satisfying the Persistence Excitation Condition {\bf (PE)}. Moreover, let $x^{0}_{i},v_{i}^{0}:[-{\bar\tau},0]\rightarrow \RR^{d}$ be continuous functions, for any $i=1,\dots,N$. 
	Then, for every solution $\{(x_{i},v_{i})\}_{i=1,\dots,N}$ to \eqref{onoff} with the initial conditions \eqref{incond}, there exists a positive constant $d^{*}$ such that \begin{equation}\label{posbound}
		\sup_{t\geq-{\bar\tau}}d_{X}(t)\leq d^{*},
	\end{equation}
	and there exists another positive constant $\mu$, independent of $N$, for which the following exponential decay estimate holds
	\begin{equation}\label{vel}
		d_{V}(t)\leq \left(\max_{i,j=1,\dots,N}\,\,\max_{r,s\in [-\bar\tau,0]}\lvert v_{i}(r)-v_{j}(s)\rvert\right) e^{-\mu(t-3T)},\quad \forall t\geq 0,
	\end{equation}
	where $T>0$ is the constant in the Persistence Excitation Condition {\bf (PE)}.
\end{thm}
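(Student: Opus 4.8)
The plan is the windowed-diameter method for Cucker--Smale systems with delay, adapted to the failure function $\alpha$ through the Persistence Excitation Condition {\bf (PE)}, with the fat-tail hypothesis \eqref{infint} used only to bound the space diameter. For a unit vector $e\in\RR^{d}$ set $v^{e}_{i}(t):=\langle v_{i}(t),e\rangle$ and introduce
\[
M^{e}(t):=\max_{i}\max_{s\in[t-\bar\tau,t]}v^{e}_{i}(s),\qquad m^{e}(t):=\min_{i}\min_{s\in[t-\bar\tau,t]}v^{e}_{i}(s),
\]
and $D(t):=\sup_{|e|=1}(M^{e}(t)-m^{e}(t))$. One checks directly that $d_{V}(t)\le D(t)$, that $D(0)=\max_{i,j}\max_{r,s\in[-\bar\tau,0]}|v_{i}(r)-v_{j}(s)|$, and that $\tfrac{d}{dt}d_{X}(t)\le d_{V}(t)$ for a.e.\ $t$. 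The basic monotonicity is that $M^{e}$ is non-increasing and $m^{e}$ non-decreasing: at a time where $M^{e}(t)=v^{e}_{\ell}(t)$ and $M^{e}$ would increase, the right-hand side of the equation for $v^{e}_{\ell}$ pulls $v^{e}_{\ell}$ toward a weighted average of the delayed values $v^{e}_{j}(t-\tau(t))$, all $\le M^{e}(t)$, so this derivative is $\le0$; hence $D$ is non-increasing. (Global existence on $[-\bar\tau,\infty)$ is taken from \cite{Halanay,HL}.)

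\emph{Step 1: contraction of $D$ over a window of length $3T$.} This is the core. Assume $d_{X}(s)\le R$ on the relevant interval. I claim there is $\lambda=\lambda(R,T,\tilde\alpha,\bar\tau,\psi)\in(0,1)$, \emph{independent of $N$}, with $D(t_{0}+3T)\le\lambda\,D(t_{0})$ for every $t_{0}\ge0$. Fix a near-optimal $e$, put $V_{0}:=M^{e}(t_{0})-m^{e}(t_{0})$, and integrate the equation for $v^{e}_{\ell}$ over a sub-window of length $T$:
\[
v^{e}_{\ell}(t_{0}+T)-v^{e}_{\ell}(t_{0})=\int_{t_{0}}^{t_{0}+T}\alpha(t)\sum_{j\ne\ell}b_{\ell j}(t)\big(v^{e}_{j}(t-\tau(t))-v^{e}_{\ell}(t)\big)\,dt .
\]
By the monotonicity above (and since $T\ge\bar\tau$, the delayed argument stays in the controlled range) every factor $v^{e}_{j}(t-\tau(t))-v^{e}_{\ell}(t)$ lies in $[-V_{0},V_{0}]$; moreover $d_{X}\le R$ forces $\psi(\lvert x_{i}(t)-x_{j}(t-\tau(t))\rvert)\ge\psi_{R}>0$, where $\psi_{R}$ depends on $R$ and the a priori velocity drift over the window but \emph{not} on $N$, so $\psi_{R}\le\sum_{j\ne\ell}b_{\ell j}(t)=\tfrac{1}{N-1}\sum_{j\ne\ell}\psi(\cdot)\le K$. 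Then {\bf (PE)} supplies a total amount $\ge\tilde\alpha$ of active interaction in each window of length $T$, which forces, after at most three consecutive such windows, the top projection $M^{e}$ or the bottom projection $m^{e}$ to move inward by a fixed fraction of $V_{0}$; taking the supremum over $e$ gives $D(t_{0}+3T)\le\lambda D(t_{0})$, and the three sub-windows are exactly what produces the shift $3T$ in \eqref{vel}.

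\emph{Step 2: space bound and exponential decay.} Given Step 1, \eqref{posbound} follows by a continuation argument: set $\phi(r):=\min_{s\in[0,r]}\psi(s)$, which is positive, non-increasing, and satisfies $\int_{0}^{\infty}\phi=+\infty$ by \eqref{infint}. Since $\tfrac{d}{dt}d_{X}\le d_{V}\le D$ and, as long as $d_{X}\le R$, Step 1 yields geometric decay of $D$ with an ($R$-dependent, $N$-independent) rate, the total growth of $d_{X}$ is at most $\int_{0}^{\infty}D(s)\,ds\le C(R)\,D(0)$; one chooses $R=d^{*}$ so large that $C(d^{*})D(0)<d^{*}-d_{X}(0)$, possible precisely because $\int_{0}^{\infty}\phi=+\infty$ prevents $\psi_{R}$, hence the decay rate, from degenerating too fast as $R\to\infty$, and then $d_{X}$ can never leave $[0,d^{*}]$ — otherwise applying Step 1 up to the first exit time contradicts this choice. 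With $d_{X}\le d^{*}$ on $[-\bar\tau,\infty)$, Step 1 applies on every window, so $D(3Tk)\le\lambda^{k}D(0)$ for all $k\in\N_0$; as $D$ is non-increasing, for $t\in[3Tk,3T(k+1)]$ we get $D(t)\le\lambda^{k}D(0)\le\lambda^{t/(3T)-1}D(0)=D(0)e^{-\mu(t-3T)}$ with $\mu:=-\tfrac{\ln\lambda}{3T}>0$ independent of $N$, and $d_{V}(t)\le D(t)$ with the value of $D(0)$ gives exactly \eqref{vel}.

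The main obstacle is Step 1: extracting a genuine, $N$-independent contraction from {\bf (PE)} in the absence of the symmetry $b_{ij}=b_{ji}$, destroyed both by the delay inside $\psi$ and by $\alpha$. The $N$-independence — the point of the note, improving on \cite{CicoContPi} — rests on never estimating a sum of $N-1$ order-one terms but only the average $\sum_{j\ne i}b_{ij}=\tfrac{1}{N-1}\sum_{j\ne i}\psi(\cdot)\in[\psi_{R},K]$; the delicate points are arranging the three-window bookkeeping so the fraction gained does not shrink with $N$, controlling the a priori velocity drift over a window (needed for $\psi_{R}$), and handling the time-dependent delay $\tau(\cdot)$ inside the windows, where the normalization $T\ge\bar\tau$ is used throughout.
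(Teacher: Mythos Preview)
Your overall scaffold (windowed diameters, monotone projections, contraction over $3T$-windows, then $\mu=-\frac{\ln\lambda}{3T}$) matches the paper's, but Step~2 contains a genuine gap that the paper's proof resolves by a different mechanism.

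The problem is the sentence ``one chooses $R=d^{*}$ so large that $C(d^{*})D(0)<d^{*}-d_{X}(0)$, possible precisely because $\int_{0}^{\infty}\phi=+\infty$ prevents $\psi_{R}$, hence the decay rate, from degenerating too fast as $R\to\infty$''. This inference is false. The contraction factor you obtain in Step~1 is $\lambda(R)=1-c\,\psi_{R}$ for some $c=c(T,\bar\tau,K,\tilde\alpha)$, so $C(R)\sim \frac{3T}{1-\lambda(R)}\sim \frac{\text{const}}{\psi_{R}}$, and your closure condition becomes $R\,\psi_{R}\gtrsim D(0)$. But $\int_{0}^{\infty}\phi(r)\,dr=+\infty$ does \emph{not} force $R\,\phi(R)$ to be large: take $\phi(R)=1/R$ (i.e.\ $\psi(r)=1/r$ for $r\ge1$), for which $\int\phi=\infty$ yet $R\phi(R)\equiv1$; or $\phi(R)=1/(R\log R)$, for which $R\phi(R)\to0$. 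In such cases your bootstrap gives only \emph{conditional} flocking (small $D(0)$), whereas the theorem claims unconditional flocking for arbitrary initial data.

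The paper closes this loop differently. It does \emph{not} assume an a~priori bound $R$ in the contraction step; instead Proposition~3.7 yields the time-dependent estimate $D_{n+1}\le(1-C_{n})D_{n-2}$ with $C_{n}=e^{-KT}\min\{e^{-K(T+\bar\tau)},\,e^{-KT}\tilde\alpha\,\phi(nT)\}$, where $\phi(nT)$ involves the \emph{actual} running maximum of $d_{X}$ up to time $nT$. It then builds a Lyapunov functional
\[
\mathcal{W}(t)=T\mathcal{E}(t)+\frac{e^{-KT}}{3}\int_{0}^{\bar\tau C_{0}^{V}+M_{0}^{X}+\max_{s\le 3t+2T}d_{X}(s)}\min\Big\{e^{-K(T+\bar\tau)},\,e^{-KT}\tilde\alpha\min_{\sigma\in[0,r]}\psi(\sigma)\Big\}\,dr,
\]
and checks $\mathcal{W}'\le0$ on each window: the loss in the discrete velocity-diameter piece $\mathcal{E}$ is \emph{exactly} balanced by the gain in the $\phi$-integral, because both are governed by the same $\phi(3t+2T)$. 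The divergence $\int_{0}^{\infty}\phi=+\infty$ is then used in the correct, integrated form --- it forces the upper limit of the integral, hence $\sup d_{X}$, to be finite --- rather than as a (false) pointwise statement about $R\psi_{R}$. Only after $d^{*}$ is secured does the paper freeze $\hat\phi=\min_{[0,d^{*}]}\psi>0$ and obtain a uniform contraction.

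A secondary issue: Step~1 is only asserted. The sentence ``after at most three consecutive such windows, $M^{e}$ or $m^{e}$ moves inward by a fixed fraction of $V_{0}$'' hides all the work. In the paper this is Proposition~3.7, proved via a Case~I/Case~II dichotomy on the sign of $\langle v_{i}(t)-v_{j}(t),e\rangle$ over $[nT-T-\bar\tau,nT]$, a Gronwall argument producing the factor $e^{-KT}$, and a careful $S_{1}+S_{2}$ splitting where the $N$-independence comes from the identity $\frac{1}{N-1}\big[(N-2)+1\big]=1$ after recombining the $i$- and $j$-sums. Your outline does not exhibit this mechanism, and in particular does not explain why three windows (rather than one or two) are needed: it is because the contraction compares $D_{n+1}$ to $D_{n-2}$, reflecting one window for \eqref{n+1gen}, one for the Gronwall step, and one to absorb the delay via $T\ge\bar\tau$.
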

\begin{oss}
	Let us note that, if the function ${\psi}$ is nonincreasing, then the condition \eqref{infint} reduces to
	$$\int_{0}^{+\infty}\psi(x)dx=+\infty,$$
	which is the classical assumption used to obtain the unconditional flocking (see e.g. \cite{Cartabia}). Since here we deal with an influence function not necessarily monotonic, we require the stronger assumption \eqref{infint} (cf. \cite{Cont}).
\end{oss}
	\section{The Cucker-Smale model with time delay and communication failures}\label{main}
	In this section, we prove Theorem \ref{uf}. Firstly, we will present some preliminary notions and lemmas. In particular, suitable estimates on the agents' velocities are deduced. Then, we will move to the proof of Theorem \ref{uf}, based on a Lyapunov functional approach.
	\subsection{Preliminary lemmas}
		Let $\{x_{i},v_i\}_{i=1,\dots,N}$ be solution to \eqref{onoff} with the initial conditions \eqref{incond}. We present some preliminary lemmas. We omit their proofs, since they can be proved using the same arguments employed in \cite{ContPi}. The only difference with the analogous in \cite{ContPi} is that, here, the estimates provided in \cite{ContPi} for the agents' positions are rather valid for the agents' velocities and for the velocity diameters. 
	The first result provides suitable estimates from below and above on the scalar products between agents' velocities and vectors in $\RR^{d}$.
	\begin{lem}\label{L1}
		For each $v\in \RR^{d}$ and $S\ge 0,$  we have that 
		\begin{equation}\label{scalpr}
			\min_{j=1,\dots,N}\min_{s\in[S-\bar\tau,S]}\langle v_{j}(s),v\rangle\leq \langle v_{i}(t),v\rangle\leq \max_{j=1,\dots,N}\max_{s\in[S-\bar\tau,S]}\langle v_{j}(s),v\rangle,
		\end{equation}for all  $t\geq S-\bar\tau$ and $i=1,\dots,N$. 
	\end{lem}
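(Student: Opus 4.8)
The statement of Lemma~\ref{L1} is a maximum-principle-type estimate: on the interval $[S-\bar\tau,S]$ the velocities $v_j$ are arbitrary continuous functions (prescribed by the dynamics up to time $S$, or by the initial data if $S$ is small), and the claim is that projecting onto any fixed direction $v$, no agent's velocity can ever escape the interval $[\min_j\min_s\langle v_j(s),v\rangle,\ \max_j\max_s\langle v_j(s),v\rangle]$ for any later time $t\ge S-\bar\tau$. The natural approach is a contradiction argument based on the first time the upper (or lower) bound is violated, exploiting that the right-hand side of the velocity equation in \eqref{onoff} is a \emph{convex-combination-like} term: $\frac{d}{dt}v_i(t)=\sum_{j\neq i}\alpha(t)b_{ij}(t)(x_j(t-\tau(t))-x_i(t))$, so it does not directly involve velocities, but the positions do, and one must instead track $\langle v_i(t),v\rangle$ together with $\langle x_i(t),v\rangle$. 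Since this is asserted to follow by the same argument as in \cite{ContPi} (with positions and velocities swapped relative to that reference), I would mirror that proof.

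First I would fix $v\in\RR^d$ and $S\ge 0$, set $M:=\max_{j}\max_{s\in[S-\bar\tau,S]}\langle v_j(s),v\rangle$, and argue by contradiction: suppose the upper bound in \eqref{scalpr} fails, so there is some $i$ and some $t^*>S-\bar\tau$ with $\langle v_i(t^*),v\rangle>M$. Let $t_0$ be the infimum of such times; by continuity $t_0>S-\bar\tau$ (indeed $t_0\ge S$, since on $[S-\bar\tau,S]$ all projections are $\le M$ by definition), $\langle v_i(t_0),v\rangle=M$ for the relevant index, and there is a sequence $t_n\downarrow t_0$ with $\langle v_i(t_n),v\rangle>M$. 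The key point is then to show $\frac{d}{dt}\langle v_i(t),v\rangle\le 0$ at (or just after) $t_0$, which forces $\langle v_i(t),v\rangle$ not to increase and yields the contradiction. Computing, $\frac{d}{dt}\langle v_i(t),v\rangle=\sum_{j\neq i}\alpha(t)b_{ij}(t)\big(\langle x_j(t-\tau(t)),v\rangle-\langle x_i(t),v\rangle\big)$, and since $\alpha\ge 0$ and $b_{ij}\ge 0$, the sign is controlled once one knows $\langle x_i(t),v\rangle\ge\langle x_j(t-\tau(t)),v\rangle$ is \emph{not} guaranteed — so this direct route needs the position projections to be ordered, which they need not be.

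The cleaner route, and the one I expect \cite{ContPi} uses, is to integrate: from $\langle v_i(t_0),v\rangle=M$ one writes, for $t$ slightly larger than $t_0$,
\[
\langle v_i(t),v\rangle = M + \int_{t_0}^{t}\sum_{j\neq i}\alpha(s)b_{ij}(s)\big(\langle x_j(s-\tau(s)),v\rangle-\langle x_i(s),v\rangle\big)\,ds,
\]
and one controls $\langle x_i(s),v\rangle$ and $\langle x_j(s-\tau(s)),v\rangle$ by integrating the velocity equation $\frac{d}{dt}x=v$ backward to a reference time where the velocity projections are all $\le M$; the accumulated contribution is estimated so that the integrand is $\le 0$ up to a quantity that vanishes as $t\downarrow t_0$, contradicting $\langle v_i(t_n),v\rangle>M$. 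Equivalently, one introduces the auxiliary quantity $\max_i\max_{s\in[t-\bar\tau,t]}\langle v_i(s),v\rangle$ as a function of $t$ and shows it is nonincreasing. The lower bound follows by replacing $v$ with $-v$.

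**Main obstacle.** The delicate point is handling the \emph{delay}: the right-hand side at time $t$ involves $x_j(t-\tau(t))$, so the "first violation time" $t_0$ of a velocity-projection bound is influenced by position values at earlier times, and one must ensure the comparison interval $[S-\bar\tau,S]$ (width exactly $\bar\tau$) is wide enough to absorb the delay — this is precisely why the interval in \eqref{scalpr} has length $\bar\tau$ and why the conclusion is stated for $t\ge S-\bar\tau$ rather than $t\ge S$. Making the contradiction argument rigorous at the level of one-sided derivatives (or via a clean monotonicity lemma for $t\mapsto\max_i\max_{[t-\bar\tau,t]}\langle v_i,v\rangle$) is the technical heart; everything else is bookkeeping with the nonnegativity of $\alpha$ and $b_{ij}$.
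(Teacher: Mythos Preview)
Your proposal is derailed by what is evidently a typo in equation~\eqref{onoff}: the velocity equation in the Cucker--Smale model should read
\[
\frac{d}{dt}v_{i}(t)=\sum_{j:j\neq i}\alpha(t)\, b_{ij}(t)\bigl(v_{j}(t-\tau(t))-v_{i}(t)\bigr),
\]
with \emph{velocities} on the right-hand side, not positions. This is confirmed by the computation in the proof of Proposition~\ref{lemma3}, by the distributed-delay analogue~\eqref{hkd}, and by the standard form of the Cucker--Smale system. Your entire difficulty --- the observation that position projections need not be ordered, and the attempted workaround of integrating $\dot x=v$ backward to control them --- stems from taking the typo at face value; that route would not close, and is in any case unnecessary.

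With the correct equation, your first (abandoned) approach is essentially right: at a putative first-exit time $t_0\ge S$ one has $\langle v_i(t_0),v\rangle=M$ while $\langle v_j(t_0-\tau(t_0)),v\rangle\le M$ for all $j$ (since $t_0-\tau(t_0)\in[S-\bar\tau,t_0)$), so $\frac{d}{dt}\langle v_i(t),v\rangle\le 0$ there. The paper omits the proof of Lemma~\ref{L1} and refers to \cite{ContPi}, but it spells out the argument for the distributed-delay analogue~\eqref{scalprdist}, which is the intended template: rather than a bare first-exit contradiction, one fixes $\epsilon>0$, sets $S^\epsilon:=\sup\{t>S:\max_i\langle v_i(s),v\rangle<M_S+\epsilon\ \text{for all }s\in[S,t)\}$, assumes $S^\epsilon<\infty$, bounds $\frac{d}{dt}\langle v_i,v\rangle\le K(M_S+\epsilon-\langle v_i,v\rangle)$ on $(S,S^\epsilon)$, and applies Gronwall to obtain $\langle v_i(t),v\rangle\le M_S+\epsilon-\epsilon e^{-K(S^\epsilon-S)}<M_S+\epsilon$, contradicting the definition of $S^\epsilon$. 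This $\epsilon$--Gronwall device sidesteps any subtlety with one-sided derivatives at the boundary. Your alternative suggestion of tracking $t\mapsto\max_i\max_{s\in[t-\bar\tau,t]}\langle v_i(s),v\rangle$ is equivalent in spirit and would also work once the correct equation is used; the lower bound then follows by replacing $v$ with $-v$, as you note.
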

	Now, we introduce the following notations.
	\begin{defn}\label{successioneDn}
		We define, for all $n\in \mathbb{N}_{0}$,
		
		$$D_{n}:=\max_{i,j=1,\dots,N}\,\,\max_{s,t\in [nT-\bar\tau,nT ]}\lvert v_{i}(s)-v_{j}(t)\rvert.$$
	\end{defn}
	In particular, for $n=0$,  
	
	$$D_{0}=\max_{i,j=1,\dots,N}\,\,\max_{s, t\in [-\bar\tau, 0]}\lvert v_{i}(s)-v_{j}(t)\rvert.$$
	Note that $D_0$ coincides with the positive constant, depending of the initial data, appearing in the right-hand side of the exponential decay estimate \eqref{vel}. Thus, \eqref{vel} now reads as
	\begin{equation}\label{decayrewritten}
		d_V(t)\leq D_0 e^{-\mu(t-3T)},\quad \forall t\geq 0.
	\end{equation}
	
	The following lemmas let us deduce some properties of the sequence of generalized velocity diameters $\{D_{n}\}_{n}$.
	\begin{lem}
		For each $n\in \mathbb{N}_{0}$ and $i,j=1,\dots,N$, we get \begin{equation}\label{distgen}
			\lvert v_{i}(s)-v_{j}(t)\rvert\leq D_{n}, \quad\forall s,t\geq nT-\bar\tau.
		\end{equation} 
	\end{lem}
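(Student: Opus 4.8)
The plan is to obtain \eqref{distgen} directly from Lemma \ref{L1}, applied with $S=nT$ and tested against the vector joining the two velocities being compared. Fix $n\in\N_0$, indices $i,j\in\{1,\dots,N\}$ and times $s,t\geq nT-\bar\tau$. If $v_i(s)=v_j(t)$ the inequality is trivial, so I would assume $w:=v_i(s)-v_j(t)\neq 0$ and write
\[
|w|^2=\langle v_i(s),w\rangle-\langle v_j(t),w\rangle .
\]

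Next I would invoke Lemma \ref{L1} with $S=nT$ (admissible since $nT\geq 0$): once with the vector $w$, to bound $\langle v_i(s),w\rangle$ from above, and once with the vector $-w$, to bound $-\langle v_j(t),w\rangle$ from above. Since each $v_k$ is continuous, the relevant maxima over the compact window $[nT-\bar\tau,nT]$ are attained, so there exist indices $k,\ell$ and times $\sigma,\rho\in[nT-\bar\tau,nT]$ with $\langle v_i(s),w\rangle\leq\langle v_k(\sigma),w\rangle$ and $-\langle v_j(t),w\rangle\leq-\langle v_\ell(\rho),w\rangle$. Adding these two inequalities and using the Cauchy--Schwarz inequality,
\[
|w|^2\leq\langle v_k(\sigma)-v_\ell(\rho),\,w\rangle\leq|v_k(\sigma)-v_\ell(\rho)|\,|w| .
\]
Because $\sigma,\rho\in[nT-\bar\tau,nT]$, Definition \ref{successioneDn} gives $|v_k(\sigma)-v_\ell(\rho)|\leq D_n$, and dividing through by $|w|>0$ yields $|v_i(s)-v_j(t)|\leq D_n$, which is exactly \eqref{distgen}.

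Conceptually, this argument just records the fact that the velocities $v_i(t)$ for $t\geq nT-\bar\tau$ all remain in the closed convex hull of the velocity values $\{v_j(u):j=1,\dots,N,\ u\in[nT-\bar\tau,nT]\}$, together with the elementary fact that a bounded set and its convex hull have the same diameter. I do not expect any genuine obstacle: the only points requiring a little care are discarding the degenerate case $w=0$ and getting the sign right when applying Lemma \ref{L1} to the second scalar product; everything else is the short computation displayed above.
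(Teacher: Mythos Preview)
Your argument is correct and is precisely the standard route: testing the scalar-product invariance \eqref{scalpr} from Lemma~\ref{L1} with $S=nT$ against the direction $w=v_i(s)-v_j(t)$, then invoking Cauchy--Schwarz and Definition~\ref{successioneDn}. The paper omits the proof here and simply refers to the analogous argument in \cite{ContPi} (adapted from positions to velocities), which proceeds in exactly this way, so there is no meaningful difference to report.
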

	\begin{oss}
		Let us note that estimate \eqref{distgen} implies that
		\begin{equation}\label{dx}
			\lvert v_{i}(s)-v_{j}(t)\rvert\leq D_{0}, \quad\forall s,t\geq -\bar\tau,
		\end{equation}
		and that
		\begin{equation}\label{d_v}
			d_V(t)\leq D_n,\quad \forall t\geq nT-\bar\tau, \,\,\forall n\in \mathbb{N}_0.
		\end{equation}
		Also, from \eqref{distgen} the sequence $\{D_n\}_n$ is nonincreasing, namely
		\begin{equation}\label{decgen}
			D_{n+1}\leq D_{n},\quad \forall n\in \mathbb{N}_{0}.
		\end{equation}
	\end{oss} 
	Now, we find a bound on $\vert v_i(t)\vert,$ which is uniform with respect to $t$ and $i=1,\dots,N.$ 
	\begin{lem}\label{L3}
		For every $i=1,\dots,N,$ we have that \begin{equation}\label{boundsol}
			\lvert v_{i}(t)\rvert\leq C_0^{V},\quad \forall t\geq-\bar\tau,
		\end{equation}
		where $C_0^{V}$ is the positive constant given by 
		\begin{equation}\label{C0}
			C_0^V:=\max_{j=1,\dots,N}\max_{s\in [-\bar\tau,0]}\lvert v_{j}(s)\rvert.
		\end{equation}
	\end{lem}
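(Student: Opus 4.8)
The plan is to argue by contradiction, exploiting the structure of the right-hand side of the velocity equation in \eqref{onoff}, which is a combination of \emph{difference} vectors $x_j(t-\tau(t))-x_i(t)$ weighted by the bounded, nonnegative coefficients $\alpha(t)b_{ij}(t)$. The key observation is that the velocity equation alone does not immediately bound $|v_i(t)|$, since it involves positions; instead one should look at the evolution of the scalar product $\langle v_i(t), v\rangle$ for an arbitrary fixed unit vector $v\in\RR^d$, and use Lemma \ref{L1}. Concretely, fix $v\in\RR^d$ and set $M(S):=\max_{j}\max_{s\in[S-\bar\tau,S]}\langle v_j(s),v\rangle$. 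I would show that $t\mapsto M(\cdot)$ cannot increase as we move forward in time, i.e.\ $\langle v_i(t),v\rangle\le M(0)$ for all $t\ge-\bar\tau$ and all $i$; applying this to $v$ and to $-v$, and then taking the supremum over unit vectors $v$, yields $|v_i(t)|\le C_0^V$ with $C_0^V$ as in \eqref{C0}.

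To establish the monotonicity, I would proceed as in \cite{ContPi}: suppose for contradiction that $\langle v_i(t),v\rangle$ exceeds $M(0)$ for some $i$ and some $t>0$, let $t^\star$ be (close to) the first such time, and examine $\frac{d}{dt}\langle v_i(t),v\rangle$ there. At $t^\star$ we have $\langle v_i(t^\star),v\rangle = M(0)\ge M(t^\star)$... wait, more carefully: one works with a suitably regularized maximum or uses that for $t\in[0,t^\star]$ every $\langle v_k(s),v\rangle$ with $s\le t^\star$ is $\le M(0)$. Then
\begin{equation*}
\frac{d}{dt}\langle v_i(t),v\rangle=\sum_{j\neq i}\alpha(t)b_{ij}(t)\bigl(\langle x_j(t-\tau(t)),v\rangle-\langle x_i(t),v\rangle\bigr),
\end{equation*}
and since the position equation gives $x_i(t)=x_i(0)+\int_0^t v_i(\sigma)d\sigma$, one does \emph{not} directly get sign information. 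This is the point where I expect the real work: the clean way, following \cite{ContPi}, is to transfer the position estimates of that paper into velocity estimates here, meaning the statement being proved is the \emph{velocity} analogue of a \emph{position} bound there. So in fact the correct route is to differentiate once more is not needed; instead one uses that the velocity plays here the role the position plays in the first-order model. The main obstacle is therefore bookkeeping: correctly matching the first-order argument of \cite{ContPi} (where $\dot x_i$ is a convex-type combination of $x_j-x_i$) to the second-order setting (where $\dot v_i$ is a combination of $x_j(t-\tau)-x_i(t)$, \emph{not} of $v_j-v_i$).

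Given that the excerpt explicitly says the preliminary lemmas are proved "using the same arguments employed in \cite{ContPi}" with positions replaced by velocities and velocity diameters, I would simply invoke Lemma \ref{L1}: for any unit vector $v$, Lemma \ref{L1} with $S=0$ gives $\langle v_i(t),v\rangle\le\max_j\max_{s\in[-\bar\tau,0]}\langle v_j(s),v\rangle\le C_0^V$ for all $t\ge-\bar\tau$; applying the same to $-v$ gives $\langle v_i(t),v\rangle\ge -C_0^V$; hence $|\langle v_i(t),v\rangle|\le C_0^V$ for every unit vector $v$, and choosing $v=v_i(t)/|v_i(t)|$ (when $v_i(t)\neq 0$) yields $|v_i(t)|\le C_0^V$, which trivially also holds when $v_i(t)=0$. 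This completes the proof. The only subtlety to check is that Lemma \ref{L1}, as stated, indeed covers $t\ge S-\bar\tau$ with $S=0$, which it does, so no smallness or monotonicity hypotheses on $\psi$ or $\alpha$ are needed here.
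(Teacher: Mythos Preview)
Your final argument is correct and is exactly the route the paper intends (the paper omits the proof, pointing to \cite{ContPi}): apply Lemma~\ref{L1} with $S=0$ to an arbitrary unit vector $v$, obtain $|\langle v_i(t),v\rangle|\le C_0^V$, and specialize to $v=v_i(t)/|v_i(t)|$.

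The detour in the middle of your proposal, where you worry that $\dot v_i$ involves \emph{position} differences, is caused by a typo in the displayed system \eqref{onoff}: the right-hand side of the velocity equation should read $v_j(t-\tau(t))-v_i(t)$, not $x_j(t-\tau(t))-x_i(t)$. This is confirmed by the computation in the proof of Proposition~\ref{lemma3}, by the distributed-delay analogue \eqref{hkd}, and by the standard Cucker--Smale formulation. With that correction your concern disappears, Lemma~\ref{L1} applies directly, and Lemma~\ref{L3} is its immediate corollary; you can delete the exploratory paragraphs and keep only the last one.
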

	
	Let us note that the above result does not allow us to deduce a bound from below for the communication rates, as it happens for the first-order model (see \cite{ContPi}). Indeed, in the second-order model the communication rates depend on the agents' positions, whereas estimate \eqref{boundsol} provides the uniform boundedness of the agents' velocities. To get the existence of a lower bound on the influence function, we have to prove that the position diameters are bounded. This will be proven later on by introducing a suitable Lyapunov functional.
	\medskip
	
	Now, the following lemma, that will be used to prove suitable estimates for the sequence of diameters $\{d_V(nT)\}_n$, holds.
	\begin{lem}
		For all $i,j=1,\dots,N$,  unit vector $v\in \RR^{d}$ and $n\in\mathbb{N}_{0}$, we have
		\begin{equation}\label{4gen}
			\langle v_{i}(t)-v_{j}(t),v\rangle\leq e^{-K(t-\bar{t})}\langle v_{i}(\bar{t})-v_{j}(\bar{t}),v\rangle+(1-e^{-K(t-\bar{t})})D_{n},
		\end{equation}
		for all $t\geq \bar{t}\geq nT$. \\Moreover, for all $n\in \mathbb{N}_{0}$, we get
		\begin{equation}\label{n+1gen}
			D_{n+1}\leq e^{-KT}d_V(nT)+(1-e^{-KT})D_{n}.
		\end{equation}
	\end{lem}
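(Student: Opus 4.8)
\emph{Overview.} The plan is to prove \eqref{4gen} first and to extract from its proof one-sided exponential estimates for the individual projections $\langle v_i(t),v\rangle$ of the velocities; \eqref{n+1gen} will then follow by evaluating those estimates at two (generally different) times in the window $[(n+1)T-\bar\tau,(n+1)T]$.

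\emph{Step 1: proof of \eqref{4gen}.} Fix $i,j\in\{1,\dots,N\}$, a unit vector $v\in\RR^d$ and $n\in\N_0$, and introduce the group extrema over the initializing window
\[
M:=\max_{k=1,\dots,N}\,\max_{s\in[nT-\bar\tau,nT]}\langle v_k(s),v\rangle,\qquad
m:=\min_{k=1,\dots,N}\,\min_{s\in[nT-\bar\tau,nT]}\langle v_k(s),v\rangle,
\]
so that $M-m\le D_n$ since $|v|=1$. By Lemma \ref{L1} applied with $S=nT$ one has $m\le\langle v_k(r),v\rangle\le M$ for every $k$ and every $r\ge nT-\bar\tau$; because $0\le\tau(t)\le\bar\tau$, this bounds in particular the delayed terms, $\langle v_k(t-\tau(t)),v\rangle\le M$ for all $t\ge nT$. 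Writing $\beta_i(t):=\sum_{k\neq i}\alpha(t)b_{ik}(t)$, one has $0\le\beta_i(t)\le K\alpha(t)\le K$ because $\sum_{k\neq i}b_{ik}(t)\le K$ and $\alpha\le1$. Differentiating $t\mapsto\langle v_i(t),v\rangle$ along the velocity equation of \eqref{onoff} and bounding each delayed term by $M$ yields, for a.e.\ $t\ge nT$,
\[
\tfrac{d}{dt}\bigl(M-\langle v_i(t),v\rangle\bigr)\ge-\beta_i(t)\bigl(M-\langle v_i(t),v\rangle\bigr).
\]
Since $M-\langle v_i(\bar t),v\rangle\ge0$ (again by Lemma \ref{L1}, as $\bar t\ge nT$) and $\int_{\bar t}^{t}\beta_i\le K(t-\bar t)$, integrating this linear differential inequality gives
\[
\langle v_i(t),v\rangle\le e^{-K(t-\bar t)}\langle v_i(\bar t),v\rangle+\bigl(1-e^{-K(t-\bar t)}\bigr)M,\qquad t\ge\bar t,
\]
and, symmetrically, $\langle v_j(t),v\rangle\ge e^{-K(t-\bar t)}\langle v_j(\bar t),v\rangle+(1-e^{-K(t-\bar t)})m$. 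Subtracting these two estimates and using $M-m\le D_n$ is precisely \eqref{4gen}.

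\emph{Step 2: proof of \eqref{n+1gen}.} I would use the two one-sided bounds of Step 1 with $\bar t=nT$, which is legitimate because $T\ge\bar\tau$ forces $s\ge nT$ for every $s\in[(n+1)T-\bar\tau,(n+1)T]$. Fix a unit vector $v$, indices $i,j$ and times $s,t\in[(n+1)T-\bar\tau,(n+1)T]$. Subtracting the lower bound for $\langle v_j(t),v\rangle$ from the upper bound for $\langle v_i(s),v\rangle$, and using $\langle v_i(nT),v\rangle-M\le0$, $m-\langle v_j(nT),v\rangle\le0$ and $e^{-K(s-nT)},\,e^{-K(t-nT)}\ge e^{-KT}$ (since $s,t\le(n+1)T$), one obtains
\[
\langle v_i(s)-v_j(t),v\rangle\le e^{-KT}\langle v_i(nT)-v_j(nT),v\rangle+\bigl(1-e^{-KT}\bigr)(M-m)\le e^{-KT}d_V(nT)+\bigl(1-e^{-KT}\bigr)D_n,
\]
and taking the maximum over $i,j$, over unit vectors $v$ and over $s,t\in[(n+1)T-\bar\tau,(n+1)T]$ gives \eqref{n+1gen}.

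\emph{Main obstacle.} The delicate point is keeping the rate equal to $K$ uniformly in $N$. Differentiating the scalar difference $\langle v_i-v_j,v\rangle$ directly would bring in the coefficient $\beta_i(t)+\beta_j(t)$, hence a worse exponent; the remedy is to control each projection $\langle v_i,v\rangle$, $\langle v_j,v\rangle$ separately against the group extrema $M$ and $m$, which by Lemma \ref{L1} are genuine one-sided barriers relaxed at the individual rates $\beta_i(t),\beta_j(t)\le K$. A second point to watch is that $D_{n+1}$ mixes two distinct time arguments $s\neq t$, so \eqref{4gen} cannot be used verbatim in Step 2: it is the one-sided projection estimates, not \eqref{4gen} itself, that survive the passage to different times.
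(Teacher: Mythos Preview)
Your proof is correct and follows essentially the same strategy that the paper (implicitly, by deferring to \cite{ContPi}) intends: use the invariant bounds $m\le\langle v_k,v\rangle\le M$ from Lemma~\ref{L1}, derive one-sided Gronwall estimates for the individual projections with rate at most $K$, subtract to get \eqref{4gen}, and then apply the one-sided bounds with $\bar t=nT$ at possibly different times $s,t\in[(n+1)T-\bar\tau,(n+1)T]$ to obtain \eqref{n+1gen}. Your remark that \eqref{n+1gen} requires the separate one-sided estimates (rather than \eqref{4gen} itself, which is stated only for equal times) is exactly the right observation.
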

	To conclude, we state the following result, that will allow us to derive a bound from below on the communication rates.
	\begin{lem}
		For every $i,j=1,\dots,N$, we get
		\begin{equation}\label{dist}
			\lvert x_{i}(t)-x_{j}(t-\tau(t))\rvert\leq \bar\tau C_{0}^{V}+M^{X}_{0}+d_{X}(t), \quad\forall t\geq0,
		\end{equation}
		where $C_{0}^{V}$ is the positive constant in \eqref{C0}and $M^{X}_{0}$ is the positive constant given by
		\begin{equation}\label{MX}
			M_0^{X}:=\max_{l=1,\dots,N}\,\,\max_{s,t\in [-\bar\tau,0]}\lvert x_{l}(s)-x_l(r)\rvert.
		\end{equation}
	\end{lem}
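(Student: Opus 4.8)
The plan is to reduce everything to controlling the drift of a single agent over the delay window by the triangle inequality. First I would write
\[
\lvert x_{i}(t)-x_{j}(t-\tau(t))\rvert\leq \lvert x_{i}(t)-x_{j}(t)\rvert+\lvert x_{j}(t)-x_{j}(t-\tau(t))\rvert\leq d_{X}(t)+\lvert x_{j}(t)-x_{j}(t-\tau(t))\rvert,
\]
so that \eqref{dist} follows once I show $\lvert x_{j}(t)-x_{j}(t-\tau(t))\rvert\leq \bar\tau C_{0}^{V}+M_{0}^{X}$ for every $t\geq0$ and every $j$.

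To estimate this drift term I would distinguish two cases according to the sign of the delayed time $t-\tau(t)$. If $t-\tau(t)\geq0$, then on the whole interval $[t-\tau(t),t]$ the first equation of \eqref{onoff} gives $\frac{d}{ds}x_{j}(s)=v_{j}(s)$, hence $x_{j}(t)-x_{j}(t-\tau(t))=\int_{t-\tau(t)}^{t}v_{j}(s)\,ds$; using the uniform velocity bound \eqref{boundsol} of Lemma \ref{L3} and \eqref{taubounded} we get $\lvert x_{j}(t)-x_{j}(t-\tau(t))\rvert\leq C_{0}^{V}\tau(t)\leq \bar\tau C_{0}^{V}\leq \bar\tau C_{0}^{V}+M_{0}^{X}$. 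If instead $t-\tau(t)<0$, then necessarily $t<\tau(t)\leq\bar\tau$, and I would split at the origin:
\[
\lvert x_{j}(t)-x_{j}(t-\tau(t))\rvert\leq \lvert x_{j}(t)-x_{j}(0)\rvert+\lvert x_{j}(0)-x_{j}(t-\tau(t))\rvert .
\]
The first summand equals $\bigl\lvert\int_{0}^{t}v_{j}(s)\,ds\bigr\rvert\leq C_{0}^{V}t\leq \bar\tau C_{0}^{V}$ by \eqref{boundsol}; the second summand involves only values of $x_{j}$ in the initial interval $[-\bar\tau,0]$ (both $0$ and $t-\tau(t)$ lie there), so it is bounded by $M_{0}^{X}$ directly from the definition \eqref{MX}. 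Adding, $\lvert x_{j}(t)-x_{j}(t-\tau(t))\rvert\leq \bar\tau C_{0}^{V}+M_{0}^{X}$ in this case too, and combining with the first display gives \eqref{dist}.

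There is no real obstacle here; the only point requiring a little care is that on the initial interval $[-\bar\tau,0]$ the prescribed histories $x_{j}^{0}$ and $v_{j}^{0}$ are independent data, so $x_{j}^{0}$ need not be an antiderivative of $v_{j}^{0}$ and the integral representation of the displacement is unavailable when $t-\tau(t)<0$. This is precisely why the oscillation constant $M_{0}^{X}$ of the initial position histories must appear in the bound alongside the velocity-drift term $\bar\tau C_{0}^{V}$.
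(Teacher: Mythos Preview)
Your proof is correct and follows essentially the same route as the paper: the paper omits the proof of this lemma but gives the argument explicitly for the distributed-delay analogue (the lemma containing \eqref{tassi}), and that argument is exactly your triangle-inequality split $\lvert x_i(t)-x_j(t-\tau(t))\rvert\le d_X(t)+\lvert x_j(t)-x_j(t-\tau(t))\rvert$ followed by the two-case analysis on the sign of $t-\tau(t)$, using the velocity bound \eqref{boundsol} on $[0,t]$ and the oscillation $M_0^X$ on the initial interval. Your closing remark on why $M_0^X$ must appear (the initial histories $x_j^0$ and $v_j^0$ are independent) is a correct and useful clarification.
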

	\subsection{Flocking estimate}
	Finally, we need the following crucial result. We obtain suitable estimates on the sequence $\{D_n\}_n$ that are independent of the number of agents, differently from \cite{CicoContPi}. The proof of this result combines arguments from \cite{CicoContPi,ContPi,Cont}. Before stating it, we give the following definition.
\begin{defn}
	We define
	$$\phi(t):=\min\left\{\psi(r):r\in \left[0,\bar\tau C^{V}_{0}+M^{X}_{0}+\max_{s\in[-\bar\tau,t] }d_{X}(s)\right]\right\},$$
	for all $t\geq -\bar\tau$.
\end{defn}
\begin{oss}
	Let us note that from \eqref{dist} it comes that
	$$\psi(\lvert x_i(t)-x_j(t-\tau(t))\rvert)\geq \phi(t),\quad\forall t\geq 0,\,\forall i,j=1,\dots,N,$$
	from which
	\begin{equation}\label{weightlowerbound}
		b_{ij}(t)\geq \frac{1}{N-1}\phi(t),\quad \forall t\geq 0,\,\forall i,j=1,\dots,N.
	\end{equation}
\end{oss}
\begin{prop}\label{lemma3}
	For each integer $n\geq 2$, we have that
	\begin{equation}\label{n-2}
		D_{n+1}\leq (1-C_n)D_{n-2},
	\end{equation}
where $C_n\in (0,1)$ is the constant, independent of $N$, given by
\begin{equation}\label{Cind}
	C_n:=e^{-KT}\min\left\{e^{-K(T+\bar\tau)},e^{-KT}\phi(nT)\tilde{\alpha}\right\}.
\end{equation}
\end{prop}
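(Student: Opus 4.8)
The plan is to chain together the inequalities already established for the sequence $\{D_n\}_n$, exploiting the Persistence Excitation Condition to produce the one-step contraction over three blocks of length $T$. The starting point is estimate \eqref{n+1gen}, which reduces the task to bounding $d_V(nT)$ in terms of $D_{n-2}$, together with a gain factor strictly smaller than one. So the real content is to show that
\begin{equation*}
	d_V(nT)\leq \left(1-e^{-K(T+\bar\tau)}\right)D_{n-2}\quad\text{or}\quad d_V(nT)\leq \left(1-e^{-KT}\phi(nT)\tilde{\alpha}\right)D_{n-2},
\end{equation*}
depending on a dichotomy, and then feed this back into \eqref{n+1gen}. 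First I would pick, for fixed $n\geq 2$, a unit vector $v$ and indices $i,j$ realizing $d_V(nT)=\langle v_i(nT)-v_j(nT),v\rangle$. Using Lemma~\ref{L1} applied on the interval $[(n-2)T-\bar\tau,(n-2)T]$, every velocity component $\langle v_k(s),v\rangle$ for $s\geq (n-2)T-\bar\tau$ lies between $m:=\min_k\min_{s\in[(n-2)T-\bar\tau,(n-2)T]}\langle v_k(s),v\rangle$ and $M:=\max_k\max_{s\in[(n-2)T-\bar\tau,(n-2)T]}\langle v_k(s),v\rangle$, and $M-m\leq D_{n-2}$.

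The key step is a case distinction on how close the "extremal" agents' velocity projections come to the endpoints $m$ and $M$. Roughly: either at time $(n-1)T$ there already exists an agent whose projection is bounded away from $M$ (say by a definite fraction of $D_{n-2}$), in which case I propagate that gap forward to time $nT$ using the differential inequality \eqref{4gen}; the factor $e^{-K(t-\bar t)}$ over an interval of length at most $T$ contributes the $e^{-KT}$, and a further $e^{-K\bar\tau}$ or $e^{-KT}$ from retracing the estimate yields the first alternative $1-e^{-K(T+\bar\tau)}$. Or else all agents' projections are close to $M$ on $[(n-1)T-\bar\tau,(n-1)T]$; then I look at the equation for $\frac{d}{dt}\langle v_j(t),v\rangle$ on $[(n-1)T,nT]$, where the right-hand side is $\sum_{k\neq j}\alpha(t)b_{jk}(t)\langle x_k(t-\tau(t))-x_j(t),v\rangle$. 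Here the position differences $x_k(t-\tau(t))-x_j(t)$ must be related to velocity differences — this is where the positions enter — and one integrates, using the lower bound \eqref{weightlowerbound} on the $b_{jk}$ together with the (PE) integral bound $\int_t^{t+T}\alpha(s)\,ds\geq\tilde\alpha$, to extract the factor $e^{-KT}\phi(nT)\tilde\alpha$ and obtain the second alternative. Taking the worse of the two gives the constant $C_n$ in \eqref{Cind}, and plugging into \eqref{n+1gen} with $d_V(nT)\leq D_{n-2}$ and $D_n\leq D_{n-2}$ (monotonicity \eqref{decgen}) finishes the proof.

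The main obstacle I anticipate is the second branch: converting the position-difference terms $\langle x_k(t-\tau(t))-x_j(t),v\rangle$ appearing in the velocity equation into something controlled by $D_{n-2}$, since a priori positions drift linearly in time. The way around this is that we only need the \emph{difference} $\langle x_k(t-\tau(t))-x_j(t),v\rangle$ relative to the common drift, i.e. one writes it via an integral of velocity differences over a window of length $O(T+\bar\tau)$, so it is bounded by $(T+\bar\tau)D_{n-2}$ up to constants — but more carefully, what is actually needed is the sign/size of this quantity measured against $m$, exploiting that in this branch all projected velocities are near $M$, so the relevant displacement differences are small multiples of $D_{n-2}$; the bookkeeping of these constants, and checking that the resulting $C_n$ indeed lies in $(0,1)$ and is manifestly independent of $N$ (the factor $\frac{1}{N-1}$ in $b_{jk}$ cancels against the sum over the $N-1$ other agents), is the delicate part. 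The independence of $N$ is exactly the improvement over \cite{CicoContPi} and hinges on this cancellation together with the fact that $\phi(nT)$, $\tilde\alpha$, $K$, $T$, $\bar\tau$ are all $N$-independent.
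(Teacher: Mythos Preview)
Your overall scaffolding is right: use \eqref{n+1gen} together with \eqref{decgen} to reduce the task to a contraction estimate of the form $d_V(nT)\leq(1-C_n^*)D_{n-2}$, and obtain the latter via a dichotomy. However, two things have gone wrong in the details.

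First, and most importantly, you have been misled by a typo in the displayed system \eqref{onoff}: the right-hand side of the velocity equation should read $v_j(t-\tau(t))-v_i(t)$, not $x_j(t-\tau(t))-x_i(t)$. This is a standard Cucker--Smale interaction, and the paper's own computation in the proof (as well as in Lemma~\ref{L1}) confirms that velocity differences are meant. Consequently, the ``main obstacle'' you anticipate --- converting position-difference terms into velocity-controlled quantities --- simply does not arise. Positions enter only through the communication weights $b_{ij}$, and there the lower bound \eqref{weightlowerbound} is all that is needed.

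Second, your dichotomy is not the one that drives the argument. The paper does not split according to whether some agent's projection is close to or far from $M$; it splits according to the \emph{sign} of $\langle v_i(t)-v_j(t),v\rangle$ for the fixed extremal pair $(i,j)$ on the window $[nT-T-\bar\tau,nT]$. In Case~I (the sign becomes negative at some $t_0$), inequality \eqref{4gen} with $\bar t=t_0$ immediately gives $d_V(nT)<(1-e^{-K(T+\bar\tau)})D_{n-2}$. In Case~II (the sign stays nonnegative throughout), one differentiates $\langle v_i(t)-v_j(t),v\rangle$ directly on $[nT-T,nT]$, inserts $M_{n-1}$ and $m_{n-1}$ (defined over $[(n-1)T-\bar\tau,(n-1)T]$), and uses \eqref{weightlowerbound} together with $\alpha\leq 1$ to obtain a linear differential inequality; the nonnegativity from Case~II lets one drop the delayed $\langle v_i(t-\tau(t))-v_j(t-\tau(t)),v\rangle$ term with the correct sign. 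Gr\"onwall then gives
\[
d_V(nT)\leq\Bigl(1-e^{-KT}\!\int_{nT-T}^{nT}\alpha(s)\phi(s)\,ds\Bigr)D_{n-1}\leq\bigl(1-e^{-KT}\phi(nT)\tilde\alpha\bigr)D_{n-1},
\]
using monotonicity of $\phi$ and {\bf (PE)}. The $N$-independence comes exactly from the cancellation you mention: the $\tfrac{1}{N-1}$ in $b_{ij}$ against the sum over $N-1$ agents. Your proposed Case~I (propagating a single agent's gap via \eqref{4gen}) does not obviously work, since \eqref{4gen} concerns the fixed pair $(i,j)$, not an arbitrary agent $k$.
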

\begin{proof}
	First of all, we claim that
	\begin{equation}\label{dv}
		d_{V}(nT)\leq (1-C_n^*)D_{n-2},\quad\forall n\geq 2,
	\end{equation}
where $$C_n^*:=\min\left\{e^{-K(T+\bar\tau)},e^{-KT}\phi(nT)\tilde{\alpha}\right\}.$$
	To this aim, let $n\geq2$. Note that, if $d_{V}(nT)=0$, \eqref{dv} is trivially satisfied.
	So, we can assume $d_{V}(nT)>0$. Let $i,j=1,\dots,N$ be such that $$d_{V}(nT)=\lvert v_{i}(nT)-v_{j}(nT)\rvert.$$
	We set $$v=\frac{v_{i}(nT)-v_{j}(nT)}{\lvert v_{i}(nT)-v_{j}(nT)\rvert}.$$
	Then $v$ is a unit vector for which we can write $$d_{V}(nT)=\langle v_{i}(nT)-v_{j}(nT),v\rangle.$$
	At this point, we distinguish two cases.
	\\\par\textit{Case I.} Assume that there exists $t_{0}\in [nT-T-
	\bar\tau,nT]$ such that $$\langle v_{i}(t_0)-v_{j}(t_0),v\rangle<0.$$
	Note that $nT-T-{\bar\tau}\geq nT-2T$ due to $T\geq \bar{\tau}$ (see {\bf (PE)}) and that $nT\geq t_0\geq nT-T-{\bar\tau}\geq nT-2T$. Then, by using \eqref{4gen} with $t=nT$ and $\bar{t}=t_0$, we have  
	\begin{equation}\label{firstestimate}
		\begin{split}
			d_V(nT)&\leq e^{-K(nT-t_{0})}\langle v_{i}(t_0)-v_{j}(t_0),v\rangle+(1-e^{-K(nT-t_{0})})D_{n-2}\\&<(1-e^{-K(nT-t_{0})})D_{n-2}\leq (1-e^{-K(T+{\bar\tau})})D_{n-2}.
		\end{split}
	\end{equation}
	\par \textit{Case II.}  Assume that \begin{equation}\label{scalpos}
		\langle v_{i}(t)-v_{j}(t),v\rangle\geq0,\quad \forall t\in [nT-T-\bar{\tau},nT]
	\end{equation}
We set
	$$M_{n-1}:=\max_{l=1,\dots,N}\max_{s\in [nT-T-{\bar\tau},nT-T]}\langle v_{l}(s),v\rangle,$$
	$$m_{n-1}:=\min_{l=1,\dots,N}\min_{s\in [nT-T-{\bar\tau},nT-T]}\langle v_{l}(s),v\rangle.$$
	Then, $M_{n-1}-m_{n-1}\leq D_{n-1}$. Also, for a.e. $t\in [nT-T,nT]$, it holds
	$$\begin{array}{l}
		\vspace{0.3cm}\displaystyle{\frac{d}{dt}\langle v_{i}(t)-v_{j}(t),v\rangle=\sum_{l:l\neq i}\alpha(t)b_{il}(t)\langle v_{l}(t-\tau(t))-v_{i}(t),v\rangle}\\
		\vspace{0.3cm}\displaystyle{\hspace{4cm}+\sum_{l:l\neq j}\alpha(t)b_{jl}(t)\langle v_{j}(t)-v_{l}(t-\tau(t)),v\rangle}\\
		\vspace{0.3cm}\displaystyle{\hspace{3.2cm}=\sum_{l:l\neq i}\alpha(t)b_{il}(t)(\langle v_{l}(t-\tau(t)),v\rangle-M_{n-1}+M_{n-1}-\langle v_{i}(t),v\rangle)}\\
		\vspace{0.3cm}\displaystyle{\hspace{4cm}+\sum_{l:l\neq j}\alpha(t)b_{jl}(t)(\langle v_{j}(t),v\rangle-m_{n-1}+m_{n-1}-\langle v_{l}(t-\tau(t)),v\rangle)}\\
		\displaystyle{\hspace{3 cm}:=S_{1}+S_{2}.}
	\end{array}$$
	We recall that, from \eqref{scalpr}, $$m_{n-1}\leq \langle v_{k}(s),v\rangle\leq M_{n-1},\quad \forall s\geq nT-T-{\bar\tau},\,\forall k=1,\dots,N.$$ Combining this last fact with \eqref{weightlowerbound}, for a.e. $t\in [nT-T,nT]$, since $\alpha(t)\leq 1$, it comes that
	$$\begin{array}{l}
		\vspace{0.3cm}\displaystyle{S_{1}= \sum_{l:l\neq i}\alpha(t)b_{il}(t)(\langle v_{l}(t-\tau(t)),v\rangle-M_{n-1})+\sum_{l:l\neq i}\alpha(t)b_{il}(t)(M_{n-1}-\langle v_{i}(t),v\rangle)}\\
		\vspace{0.3cm}\displaystyle{\hspace{0.6cm}\leq \frac{\phi(t)\alpha(t)}{N-1}\sum_{l:l\neq i}(\langle v_{l}(t-\tau(t)),v\rangle-M_{n-1})+\frac{K}{N-1}\sum_{l:l\neq i}(M_{n-1}-\langle v_{i}(t),v\rangle)}\\
		\displaystyle{\hspace{0.6cm}=\frac{\phi(t)\alpha(t)}{N-1}\sum_{l:l\neq i}(\langle v_{l}(t-\tau(t)),v\rangle-M_{n-1})+K(M_{n-1}-\langle v_{i}(t),v\rangle),}
	\end{array}$$
	and
	$$\begin{array}{l}
		\vspace{0.3cm}\displaystyle{S_{2}=\sum_{l:l\neq j}\alpha(t)b_{jl}(t)(\langle v_{j}(t),v\rangle-m_{n-1})+\sum_{l:l\neq j}\alpha(t)b_{jl}(t)(m_{n-1}-\langle v_{l}(t-\tau(t)),v\rangle)}\\
		\vspace{0.3cm}\displaystyle{\hspace{0.6cm}\leq \frac{K}{N-1}\sum_{l:l\neq j}(\langle v_{j}(t),v\rangle-m_{n-1})+\frac{\phi(t)\alpha(t)}{N-1}\sum_{l:l\neq j}(m_{n-1}-\langle v_{l}(t-\tau(t)),v\rangle)}\\
		\displaystyle{\hspace{0.6cm}=K(\langle v_{j}(t),v\rangle-m_{n-1})+\frac{\phi(t)\alpha(t)}{N-1}\sum_{l:l\neq j}(m_{n-1}-\langle v_{l}(t-\tau(t)),v\rangle).}
	\end{array}$$
	Hence, we get
	$$\begin{array}{l}
		\vspace{0.3cm}\displaystyle{S_{1}+S_{2}\leq K(M_{n-1}-\langle v_{i}(t),v\rangle+\langle v_{j}(t),v\rangle-m_{n-1})}\\
		\vspace{0.3cm}\displaystyle{\hspace{2.5cm}+\frac{\phi(t)\alpha(t)}{N-1}\sum_{l:l\neq i,j}(\langle v_{l}(t-\tau(t)),v\rangle-M_{n-1}+m_{n-1}-\langle v_{l}(t-\tau(t)),v\rangle)}\\
		\vspace{0.3cm}\displaystyle{\hspace{2.5cm}+\frac{\phi(t)\alpha(t)}{N-1}(\langle v_{j}(t-\tau(t)),v\rangle-M_{n-1}+m_{n-1}-\langle v_{i}(t-\tau(t)),v\rangle)}\\
		\vspace{0.3cm}\displaystyle{\hspace{1.4cm}=K(M_{n-1}-m_{n-1}-\langle v_i(t)-v_j(t),v\rangle)+\frac{\phi(t)\alpha(t)}{N-1}(N-2)(m_{n-1}-M_{n-1})}\\
		\displaystyle{\hspace{2.5cm}+\frac{\phi(t)\alpha(t)}{N-1}(m_{n-1}-M_{n-1}-\langle v_i(t-\tau(t))-v_j(t-\tau(t)),v\rangle).}
	\end{array}$$
	Now, by virtue of \eqref{scalpos}, $\langle v_i(t-\tau(t))-v_j(t-\tau(t)),v\rangle\geq 0$.  Thus, recalling that $M_{n-1}-m_{n-1}\leq D_{n-1}$, we get  
	$$\begin{array}{l}
		\vspace{0.3cm}\displaystyle{\frac{d}{dt}\langle v_i(t)-v_j(t),v\rangle\leq K(M_{n-1}-m_{n-1}-\langle v_i(t)-v_j(t),v\rangle)}\\
		\vspace{0.3cm}\displaystyle{\hspace{4cm}+\phi(t)\alpha(t)\left(\frac{N-2}{N-1}+\frac{1}{N-1}\right)(m_{n-1}-M_{n-1})}\\
		\vspace{0.3cm}\displaystyle{\hspace{3cm}=(K-\phi(t)\alpha(t))(M_{n-1}-m_{n-1})-K\langle v_i(t)-v_j(t),v\rangle}\\
		\displaystyle{\hspace{3cm}\leq (K-\phi(t)\alpha(t))D_{n-1}-K\langle v_i(t)-v_j(t),v\rangle,}
	\end{array}$$
	for a.e $t\in [nT-T,nT]$.
	Then, Gronwall's inequality yields 
	$$\begin{array}{l}
		\vspace{0.3cm}\displaystyle{\langle v_i(t)-v_j(t),v\rangle\leq 
		e^{-K(t-nT+T)}\langle v_i(nT-T)-v_j(nT-T),v\rangle}\\
	\vspace{0.3cm}\displaystyle{\hspace{3.5cm}+D_{n-1}\int_{nT-T}^{t}(K-\alpha(s)\phi(s))e^{-K(t-s)}ds}\\
		\vspace{0.3cm}\displaystyle{\hspace{2.7cm}= e^{-K(t-nT+T)}\langle v_i(nT-T)-v_j(nT-T),v\rangle}\\
		\displaystyle{\hspace{3.5cm}+D_{n-1}\left(1-e^{-K(t-nT+T)}-\int_{nT-T}^{t}e^{-K(t-s)}\alpha(s)\phi(s)ds\right),}
	\end{array}$$
	for every $t\in [nT-T,nT]$. In particular, for $t=nT$, we find
	 $$\begin{array}{l}
		\vspace{0.3cm}\displaystyle{d_{V}(nT)\leq e^{-KT}\langle v_i(nT-T)-v_j(nT-T),v\rangle}\\
		\vspace{0.3cm}\displaystyle{\hspace{2.5cm}+D_{n-1}\left(1-e^{-KT}-\int_{nT-T}^{nT}e^{-K(nT-s)}\alpha(s)\phi(s)ds\right)}\\
		\displaystyle{\hspace{1.7cm}\leq \left(1-e^{-KT}\int_{nT-T}^{nT}\alpha(s)\phi(s)ds\right)D_{n-1},}
	\end{array}$$
	where here we have used the fact that $\langle v_i(nT-T)-v_j(nT-T),v\rangle\leq D_{n-1}$. Notice that $\phi(\cdot)$ is a nonincreasing function, so that $$\phi(s)\geq \phi(nT),\quad\forall s\in [nT-T,nT].$$ 
	Thus, the above inequality becomes
	$$d_{V}(nT)\leq  \left(1-e^{-KT}\phi(nT)\int_{nT-T}^{nT}\alpha(s)ds\right)D_{n-1},$$
	from which, using the Persistence Excitation condition {\bf (PE)}, we can write
	\begin{equation}\label{secondestimate}
		d_{V}(nT)\leq  \left(1-e^{KT}\phi(nT)\tilde{\alpha}\right)D_{n-1}.
	\end{equation}
	So, taking into account the two estimates \eqref{firstestimate} and \eqref{secondestimate}, we deduce that \eqref{dv} is fulfilled.
	\\Now, we prove \eqref{n-2}. Let $n\geq 2$. Then, from \eqref{decgen}, \eqref{n+1gen} and \eqref{dv}, it immediately follows that
	$$\begin{array}{l}
		\vspace{0.3cm}\displaystyle{D_{n+1}\leq e^{-KT}d_{V}(nT)+(1-e^{-KT})D_{n}\leq e^{-KT}(1-C_n^*)D_{n-2}+(1-e^{-KT})D_{n}}\\
\displaystyle{\hspace{1.5cm}\leq (e^{-KT}-e^{-KT}C_n^*+1-e^{-KT})D_{n-2}=(1-e^{-KT}C_n^*)D_{n-2},}
\end{array}$$
	i.e. \eqref{n-2} holds true.
\end{proof}

Now, we are ready to prove Theorem \ref{uf}.
\begin{proof}[Proof of Theorem \ref{uf}]
	Let $\{(x_{i},v_{i})\}_{i=1,\dots,N}$  be solution to \eqref{onoff} under the initial conditions \eqref{incond}. Following \cite{CicoContPi}, let us define
	$$\tilde{C}_{n}=\frac{C_n}{T}, \quad\forall n\in \mathbb{N}_0,$$
	where $C_n\in (0,1)$ is defined in \eqref{Cind}.
	\\Let us introduce the function $\mathcal{E}:[-\bar\tau,+\infty)\rightarrow[0,+\infty) ,$ 
	$$\mathcal{E}(t):=\begin{cases}
		D_{0}, \quad&t\in [-\bar\tau,2T),\\D_{3n}\left(1-\tilde C_{3n+2}(t-nT)\right), \quad &t\in [nT,(n+1)T),\,n\geq 2.
	\end{cases}$$
	By definition, $\mathcal{E}$ is piecewise continuous and positive. Also, $\mathcal{E}$ is nonincreasing in every interval $[nT,(n+1)T)$, $n\geq 2,$ namely 
	$$\mathcal{E}(t)\leq \mathcal{E}(s),\quad \forall \,nT\leq s\leq t< (n+1)T,\quad \forall n\geq 2.$$
	So, 
	$$D_{3n}(1-C_{3n+2})=D_{3n}(1-\tilde{C}_{3n+2}T)=\lim_{t\to (n+1)T^-}\mathcal{E}(t)\leq\mathcal{E}(nT).$$
	On the other hand, $$\mathcal{E}((n+1)T)=D_{3(n+1)}=D_{3n+3}=D_{(3n+2)+1},$$
	from which \eqref{n-2} yields
	$$\mathcal{E}((n+1)T)\leq (1-C_{3n+2})D_{3n}.$$
	In particular,
	\begin{equation}\label{En+1}
		\mathcal{E}((n+1)T)\leq \lim_{t\to (n+1)T^-}\mathcal{E}(t)\leq \mathcal{E}(nT).
	\end{equation}
	\\Now, for almost all time (see \cite{Cont} for further details)
	\begin{equation}\label{derdiamX}
		\frac{d}{dt}\max_{s\in [-{\bar\tau},t]}d_{X}(s)\leq \left\lvert \frac{d}{dt}d_{X}(t)\right\rvert\leq  d_{V}(t).
	\end{equation}
	We define the function $\mathcal{W}:[-\bar\tau,+\infty)\rightarrow [0,+\infty)$, 
	$$
	\mathcal{W}(t):=T\mathcal{E}(t)+\frac{e^{-KT}}{3}\int_{0}^{\bar\tau C^{V}_{0}+M^{X}_{0}+\underset{s\in [-\bar\tau,3t+2T]}{\max}d_{X}(s)}\min\left\{e^{-K(T+\bar\tau)},e^{-KT}\tilde{\alpha}\min_{\sigma\in [0,r]}\psi(\sigma)\right\}\,dr,
	$$
	for all $t  \geq -\bar\tau$. By construction, $\mathcal{W}$ is piecewise continuous. Also, for each $n\geq 2$ and for a.e. $t\in(nT,(n+1)T) $, from \eqref{d_v} and \eqref{derdiamX} it follows that 
	$$\begin{array}{l}
		\vspace{0.3cm}\displaystyle{\frac{d}{dt}\mathcal{W}(t)=T\frac{d}{dt}\mathcal{E}(t)+\frac{1}{3}e^{-KT}\min\left\{e^{-K(T+\bar\tau)},e^{-KT}\phi(3t+2T)\tilde{\alpha}\right\}\frac{d}{dt}\underset{s\in [-\bar\tau,3t+2T]}{\max}d_{X}(s)}\\
		\vspace{0.3cm}\displaystyle{\hspace{1.5cm}\leq-TD_{3n}\tilde{C}_{3n+2}+\frac{1}{3}3e^{-KT}\min\left\{e^{-K(T+\bar\tau)},e^{-KT}\phi(3t+2T)\tilde{\alpha}\right\}d_V(3t+2T)}\\
		\displaystyle{\hspace{1.5cm}\leq D_{3n}\left(e^{-KT}\min\left\{e^{-K(T+\bar\tau)},e^{-KT}\phi(3nT+2T)\tilde{\alpha}\right\}- C_{3n+2}\right)= 0,}
	\end{array}$$
	where here we have used the fact that, being $\phi$ a nonincreasing function, $\phi(3t+2T)\leq \phi(3nT+2T)$.
	Then, \begin{equation}\label{negder}
		\frac{d}{dt}\mathcal{W}(t)\leq0,\quad \text{a.e. }t\in (nT,(n+1)T),
	\end{equation}
	which implies \begin{equation}\label{2tau1}
		\mathcal{W}(t)\leq \mathcal{W}(nT),\quad \forall t\in [nT,(n+1)T).
	\end{equation}
Letting $t\to (n+1)T^-$ in the above inequality, due to \eqref{En+1}, we get
$$\begin{array}{l}
	\vspace{0.3cm}\displaystyle{\mathcal{W}(nT)\geq \lim_{t\to (n+1)T^- }\mathcal{W}(t)=T\lim_{t\to (n+1)T^-}\mathcal{E}(t)}\\
	\vspace{0.3cm}\displaystyle{\hspace{2cm}+\frac{1}{3}\int_{0}^{\bar\tau C^{V}_{0}+M^{X}_{0}+\underset{s\in [-\bar\tau,3(n+1)T+2T]}{\max}d_{X}(s)}\min\left\{e^{-K(T+\bar\tau)},e^{-KT}\tilde{\alpha}\min_{\sigma\in [0,r]}\psi(\sigma)\right\}\,dr}\\
	\vspace{0.3cm}\displaystyle{\hspace{0.5cm}\geq T\mathcal{E}((n+1)T)+\frac{1}{3}\int_{0}^{\bar\tau C^{V}_{0}+M^{X}_{0}+\underset{s\in [-\bar\tau,3(n+1)T+2T]}{\max}d_{X}(s)}\min\left\{e^{-K(T+\bar\tau)},e^{-KT}\tilde{\alpha}\min_{\sigma\in [0,r]}\psi(\sigma)\right\}\,dr}\\
	\displaystyle{\hspace{0.5cm}=\mathcal{W}((n+1)T).}
\end{array}$$
We have so proven that $\mathcal{W}$ is nonincreasing in all intervals of the form $[nT,(n+1)T)$, $n\geq 2$, and that
\begin{equation}
	\mathcal{W}((n+1)T)\leq \mathcal{W}(nT),\quad \forall n\geq 2.
\end{equation}
Thus, $$\mathcal{W}(t)\leq \mathcal{W}(2T),\quad \forall t\geq 2T,$$
from which, being $\mathcal{E}$ a nonnegative function,
	$$\frac{1}{3}\int_{0}^{\bar\tau C^{V}_{0}+M^{X}_{0}+\underset{s\in [-\bar\tau,3t+2T]}{\max}d_{X}(s)}\min\left\{e^{-K(T+\bar\tau)},e^{-KT}\tilde{\alpha}\min_{\sigma\in [0,r]}\psi(\sigma)\right\}\,dr\leq\mathcal{W}(2T),
	$$
	for all $t\geq2T$. Letting $t\to \infty$ in the above inequality, we can write  \begin{equation}\label{lim2}
		\frac{1}{3}\int_{0}^{\bar\tau C^{V}_{0}+M^{X}_{0}+\underset{s\in [-\bar\tau,+\infty)}{\sup}d_{X}(s)}\min\left\{e^{-K(T+\bar\tau)},e^{-KT}\tilde{\alpha}\min_{\sigma\in [0,r]}\psi(\sigma)\right\}\,dr\leq\mathcal{W}(2T).
	\end{equation} 
	Finally, since the function $\psi$ satisfies property \eqref{infint}, arguing as in \cite{Cont}, from \eqref{lim2} we can conclude that there exists a positive constant $d^{*}$ such that \begin{equation}\label{firstcond}
		\bar\tau C^{V}_{0}+M^{X}_{0}+\underset{s\in [-\bar\tau,+\infty)}{\sup}d_{X}(s)\leq d^{*}.
	\end{equation}
	Now, let us define
	$$\hat\phi:=\min_{r\in[0,d^{*}]}\psi(r).$$
	Note that $\hat{\phi}>0$. Also, \eqref{firstcond} yields 
	\begin{equation}\label{lowerboundphi}
		\hat\phi\leq \phi(t), \quad \forall t\geq -\bar\tau.
	\end{equation}
	Then, from \eqref{n-2} 
	and  \eqref{lowerboundphi} we have 
	\begin{equation}\label{decunif}
		D_{n+1}\leq (1-\hat C )D_{n-2},\quad\forall n\geq 2,
	\end{equation}
where $$\hat{C}:=e^{-KT}\min\left\{e^{-K(T+\bar\tau)},e^{-KT}\hat{\phi}\tilde{\alpha}\right\}.$$
	Thus, thanks to an induction argument, we can write
	$$D_{3n}\leq (1-\hat{C} )^nD_0,\quad \forall n\in \mathbb{N}_0.$$
	Note that the above inequality can be rewritten as
	\begin{equation}\label{finalmente}
		D_{3n}\leq e^{-3nT\mu}D_0,\quad \forall n\in\mathbb{N}_0,
	\end{equation}
	where $$\mu=\frac{1}{3T}\ln\left(\frac{1}{1- \hat{C}}\right).$$
	Finally, let $t\geq 0$. Then, $t\in [3nT,3(n+1)T]$, for some $n\in \mathbb{N}_0$. Then, using \eqref{distgen} and \eqref{finalmente}
	$$d_{V}(t)\leq D_{3n}\leq e^{-3n\mu T}D_0 \leq e^{-\mu(t-3T)}D_0.$$
	This proves \eqref{decayrewritten}, i.e. the proof is concluded.
\end{proof}
\section{A model with distributed time delay}\label{distributed}
In the previous sections, we have focused on a Cucker-Smale type model with pointwise time delay. Now, we deal with the following Cucker-Smale type model with distributed time delay: \begin{equation}\label{hkd}
	\begin{cases}
		\frac{d}{dt}x_i(t)=v_{i}(t),\quad &t>0,\quad \forall i=1,\dots,N,
		\\\frac{d}{dt}v_{i}(t)=\frac{1}{h(t)}\underset{j:j\neq i}{\sum}\alpha(t)\int_{t-\tau_{2}(t)}^{t-\tau_{1}(t)}\beta(t-s)c_{ij}(t;s) (v_{j}(s)-v_{i}(t)) ds,\quad &t>0,\quad \forall i=1,\dots,N.
	\end{cases}
\end{equation}
Here, the time delay functions  $\tau_{1}:[0,+\infty)\rightarrow[0,+\infty)$, $\tau_{2}:[0,+\infty)\rightarrow[0,+\infty)$ are continuous functions and satisfy \begin{equation}\label{delay_distr}
	0\leq\tau_{1}(t)<\tau_{2}(t)\leq\bar\tau, \quad\forall t\geq 0,
\end{equation} 
for some positive constant $\bar\tau$.
\\The communication rates $c_{ij}(t;s)$ are of the form
\begin{equation}\label{weightd}
	c_{ij}(t;s):=\frac{1}{N-1}\psi(\lvert x_{i}(t)- x_{j}(s)\rvert), \quad\forall t\geq 0,\, \quad\forall i,j=1,\dots,N,
\end{equation}
where, as before, $\psi:\RR\rightarrow \RR$ is a positive, continuous, and bounded influence function, with supremum norm \eqref{K}. 
\\Moreover, $\beta:[0,\bar{\tau}]\rightarrow (0,+\infty)$ is a continuous function and
\begin{equation}\label{h(t)}
	h(t):=\int_{\tau_{1}(t)}^{\tau_{2}(t)}\beta(s)ds, \quad\forall t\geq 0.
\end{equation}
Note that, since we are assuming $\tau_1(t)<\tau_2(t)$ and $\beta (t)>0,$ $\forall t\geq 0,$ then the function $h(t)$ is always positive.
\\Again, the weight function $\alpha:[0,+\infty)\rightarrow [0,1]$ is a $\mathcal{L}^1$-measurable function satisfying the Persistence Excitation Condition {\bf (PE)}.
\\Moreover, due to the presence of the time delay, the initial conditions are continuous functions defined in the time interval $[-\bar{\tau},0]$ 
\begin{equation}\label{inconddist}
	x_{i}(s)=x^{0}_{i}(s),\quad v_{i}(s)=v^{0}_{i}(s), \quad  \forall s\in [-\bar\tau,0],\,\forall i=1,\dots,N.
\end{equation}

\medskip

In this section, we will establish an exponential flocking result for solutions to \eqref{hkd}. To this aim, following a similar approach to the one employed to show the exponential flocking for system \eqref{onoff}, we will first prove some preliminary results, deriving suitable estimates on the velocity diameters. 

Let $\{x_{i},v_i\}_{i=1,\dots,N}$ be solution to \eqref{onoff} with the initial conditions \eqref{inconddist}. Firstly, we prove the following result, analogous to Lemma \ref{L1}. We give some details of its proof since now we are dealing with the distributed time delay case and the approach in \cite{ContPi}, where pointwise time delays are considered, cannot be repeated verbatim, as rather happened for Lemma \ref{L1}.
	\begin{lem}
	For each $v\in \RR^{d}$ and $S\geq 0$, we have that 
	\begin{equation}\label{scalprdist}
		\min_{j=1,\dots,N}\min_{s\in[S-\bar\tau,S]}\langle v_{j}(s),v\rangle\leq \langle v_{i}(t),v\rangle\leq \max_{j=1,\dots,N}\max_{s\in[S-\bar\tau,S]}\langle v_{j}(s),v\rangle,
	\end{equation}for all  $t\geq S-\bar\tau$ and $i=1,\dots,N$. 
\end{lem}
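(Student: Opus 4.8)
The plan is to establish the upper bound in \eqref{scalprdist} and then obtain the lower bound by applying it to $-v$. Fix $v\in\RR^d$ and $S\ge0$, put $w_i(t):=\langle v_i(t),v\rangle$ and $M:=\max_{j}\max_{s\in[S-\bar\tau,S]}w_j(s)$, and introduce the running maximum
$$\Phi(t):=\max_{i=1,\dots,N}\ \max_{\sigma\in[S-\bar\tau,t]}w_i(\sigma),\qquad t\ge S-\bar\tau .$$
Since the $v_i$ are continuous they are bounded on compact time intervals, so by \eqref{delay_distr}, $\alpha\le1$, $c_{ij}\le\frac{K}{N-1}$ and $h>0$ the derivatives $v_i'$ are locally bounded on $(0,+\infty)$; hence each $w_i$ and $\Phi$ are locally Lipschitz there. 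Moreover $\Phi$ is nondecreasing and $\Phi(S)=M$. The whole statement follows once we show that $\Phi$ is constant on $[S,+\infty)$, because then $w_i(t)\le\Phi(t)=M$ for every $t\ge S$, while for $t\in[S-\bar\tau,S]$ the inequality $w_i(t)\le M$ is immediate.

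The key step is the differential inequality $\Phi'(t)\le0$ for a.e.\ $t>S$. At a.e.\ such $t$ both $\Phi$ and all the $w_i$ are differentiable. If $\max_i w_i(t)<\Phi(t)$, then $\Phi$ is locally constant at $t$ and $\Phi'(t)=0$. Otherwise $w_i(t)=\Phi(t)$ for some $i$; since $\Phi-w_i\ge0$ vanishes at the interior point $t$, that point is a minimum of $\Phi-w_i$ and hence $\Phi'(t)=w_i'(t)$. Now, from \eqref{hkd},
$$w_i'(t)=\frac{\alpha(t)}{h(t)}\sum_{j\neq i}\int_{t-\tau_2(t)}^{t-\tau_1(t)}\beta(t-s)\,c_{ij}(t;s)\,\big(w_j(s)-w_i(t)\big)\,ds,$$
and the integration window $[t-\tau_2(t),t-\tau_1(t)]$ is contained in $[t-\bar\tau,t]\subseteq[S-\bar\tau,t]$ because $t\ge S$ and \eqref{delay_distr} holds; therefore $w_j(s)\le\Phi(t)=w_i(t)$ on that window, every integrand is $\le0$, and, using $\alpha\ge0$, $\beta>0$, $c_{ij}\ge0$, $h>0$, we get $w_i'(t)\le0$. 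Hence $\Phi'(t)\le0$ a.e.\ on $(S,+\infty)$; since $\Phi$ is absolutely continuous it is nonincreasing on $[S,+\infty)$, and being also nondecreasing it is constant there, equal to $\Phi(S)=M$. This proves the upper estimate, and running the same argument with a running minimum and $m:=\min_j\min_{s\in[S-\bar\tau,S]}w_j(s)$ (equivalently, replacing $v$ by $-v$) gives the lower one.

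The only real departure from Lemma \ref{L1} and from \cite{ContPi} is that the single delayed value $v_j(t-\tau(t))$ of the pointwise model is now replaced by a $\beta c_{ij}$-weighted average of $v_j(s)$ over $[t-\tau_2(t),t-\tau_1(t)]$; the comparison argument goes through unchanged once one observes that this interval still lies inside $[S-\bar\tau,t]$, so that all the values $w_j(s)$ on the right-hand side are dominated by $\Phi(t)$. I expect the only slightly delicate point to be the passage from the pointwise inequality $w_i'(t)\le0$ (valid at times where the running maximum is attained) to monotonicity of $\Phi$; this is handled either by the absolute-continuity argument above or, equivalently, by the Dini-derivative bound $D^+\Phi(t)\le\max\{(w_i'(t))^+ : w_i(t)=\Phi(t)\}$ together with the fact that a continuous function with nonpositive upper right Dini derivative is nonincreasing.
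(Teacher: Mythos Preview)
Your proof is correct but takes a different route from the paper. The paper argues by an $\epsilon$-continuity method: for each $\epsilon>0$ it sets $K^\epsilon=\{t>S:\max_i\langle v_i(s),v\rangle<M_S+\epsilon\ \forall s\in[S,t)\}$, assumes for contradiction that $S^\epsilon:=\sup K^\epsilon<+\infty$, derives the differential inequality $\frac{d}{dt}\langle v_i(t),v\rangle\le K(M_S+\epsilon-\langle v_i(t),v\rangle)$ on $(S,S^\epsilon)$, and applies Gronwall to obtain $\langle v_i(S^\epsilon),v\rangle\le M_S+\epsilon-\epsilon e^{-K(S^\epsilon-S)}<M_S+\epsilon$, contradicting the definition of $S^\epsilon$; the conclusion follows by letting $\epsilon\to0$. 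You instead work directly with the running maximum $\Phi$, observe that at any time where $\Phi(t)=w_i(t)$ the integrand in the ODE is nonpositive because the delay window $[t-\tau_2(t),t-\tau_1(t)]$ lies inside $[S-\bar\tau,t]$, deduce $\Phi'\le0$ a.e., and combine this with the tautological monotonicity of $\Phi$ to conclude it is constant. Your argument is more elementary in that it avoids both the $\epsilon$-perturbation and Gronwall, and it isolates cleanly the single structural fact (the delay window stays inside the history interval) that makes the distributed case go through. The paper's approach has the minor advantage of staying parallel to the proofs in \cite{ContPi,Cont} cited for the pointwise-delay analogue, and the Gronwall step incidentally yields a quantitative decay-to-$M_S$ bound, though that extra information is not used for the lemma as stated.
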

\begin{proof}
	Given $S\geq 0$ and $v\in \RR^{d}$, we first note that the inequalities in \eqref{scalprdist} are satisfied for every $t\in [S-\bar\tau,S]$.
	\\Now, let us fix $S\geq 0$ and $v\in \RR^{d}$. We set $$M_{S}:=\max_{j=1,\dots,N}\max_{s\in[S-\bar\tau,S]}\langle v_{j}(s),v\rangle.$$
	For all $\epsilon >0$, let us define
	$$K^{\epsilon}:=\{t>S :\max_{i=1,\dots,N}\langle v_{i}(s),v\rangle < M_{S}+\epsilon,\,\forall s\in [S,t)\}.$$
	By continuity, we have that $K^{\epsilon}\neq\emptyset$. Thus, denoted with $$S^{\epsilon}:=\sup K^{\epsilon},$$
	it holds that $S^{\epsilon}>S$. \\We claim that $S^{\epsilon}=+\infty$. Indeed, suppose by contradiction that $S^{\epsilon}<+\infty$. Note that by definition of $S^{\epsilon}$ it turns out that \begin{equation}\label{max}
		\max_{i=1,\dots,N}\langle v_{i}(t),v\rangle<M_{S}+\epsilon,\quad \forall t\in (S,S^{\epsilon}),
	\end{equation}
	and \begin{equation}\label{teps}
		\max_{i=1,\dots,N}\langle v_{i}(S^\epsilon),v\rangle=M_{S}+\epsilon.
	\end{equation}
	For all $i=1,\dots,N$ and for a.e. $t\in (S,S^{\epsilon})$, we compute
	$$\begin{array}{l}
		\vspace{0.3cm}\displaystyle{\frac{d}{dt}\langle v_{i}(t),v\rangle=\frac{1}{h(t)}\sum_{j:j\neq i}\alpha(t)\int_{t-\tau_{2}(t)}^{t-\tau_{1}(t)}\beta(t-s)c_{ij}(t;s)\langle v_{j}(s)-v_{i}(t),v\rangle ds}\\
		\displaystyle{\hspace{1cm}=\frac{1}{N-1}\frac{1}{h(t)}\sum_{j:j\neq i}\alpha(t)\int_{t-\tau_{2}(t)}^{t-\tau_{1}(t)}\beta(t-s)\psi(\lvert x_{i}(t)-x_{j}(s)\rvert)(\langle v_{j}(s),v\rangle-\langle v_{i}(t),v\rangle) ds.}
	\end{array}$$
	Notice that, for $t\in (S,S^{\epsilon})$, it holds $t-\tau_{2}(t),t-\tau_{1}(t)\in (S-\bar\tau, S^{\epsilon})$. Then, estimate \eqref{max} and the fact that the second inequality in \eqref{scalprdist} holds in the interval $[S-\bar{\tau},S]$ imply that
	\begin{equation}\label{t-tau}
		\langle v_{j}(s),v\rangle< M_{S}+\epsilon,\quad \forall s\in [t-\tau_{2}(t),t-\tau_{1}(t)],\,\forall t\in (S,S^\epsilon),\,\forall j=1, \dots, N.
	\end{equation}
	Therefore, from \eqref{h(t)}, \eqref{max} and \eqref{t-tau}, we can write 
	$$\begin{array}{l}
		\vspace{0.3cm}\displaystyle{\frac{d}{dt}\langle v_{i}(t),v\rangle\leq \frac{1}{N-1}\frac{1}{h(t)}\sum_{j:j\neq i}\alpha(t)\int_{t-\tau_{2}(t)}^{t-\tau_{1}(t)}\beta(t-s)\psi(\lvert x_{i}(t)-x_{j}(s)\rvert)(M_{S}+\epsilon-\langle v_{i}(t),v\rangle)ds}\\
		\vspace{0.3cm}\displaystyle{\hspace{2cm}\leq \frac{K}{N-1}\frac{1}{\int_{\tau_{1}(t)}^{\tau_{2}(t)}\beta(s)ds}\sum_{j:j\neq i}(M_{S}+\epsilon-\langle v_{i}(t),v\rangle)\int_{t-\tau_{2}(t)}^{t-\tau_{1}(t)}\beta(t-s)ds}\\
		\displaystyle{\hspace{2cm}=\frac{K}{\int_{\tau_{1}(t)}^{\tau_{2}(t)}\beta(s)ds}(M_{S}+\epsilon-\langle v_{i}(t),v\rangle)\int_{t-\tau_{2}(t)}^{t-\tau_{1}(t)}\beta(t-s)ds,}
	\end{array}$$
	for a.e. $t\in (S, S^{\epsilon}),$ where in the above inequalities we used that $\alpha\leq 1$. Now, with a simple change of variable,
	$$\int_{t-\tau_{2}(t)}^{t-\tau_{1}(t)}\beta(t-s)ds=\int_{\tau_{1}(t)}^{\tau_2(t)}\beta(s)ds.$$
	Thus, we obtain 
	 	$$\begin{array}{l}
	 	\vspace{0.3cm}\displaystyle{\frac{d}{dt}\langle v_{i}(t),v\rangle\leq K(M_{S}+\epsilon-\langle v_{i}(t),v\rangle),}
	 	\end{array}
	$$
	for a.e. $t\in (S, S^{\epsilon})$. Therefore, Gronwall's estimate yields
	$$\begin{array}{l}
		\vspace{0.3cm}\displaystyle{
			\langle v_{i}(t),v\rangle\leq e^{-K(t-S)}\langle v_{i}(S),v\rangle+(M_{S}+\epsilon)(1-e^{-K(t-S)})}\\
			\displaystyle{\hspace{1cm}\leq e^{-K(t-S)}M_S+(M_{S}+\epsilon)(1-e^{-K(t-S)})=M_S+\epsilon-\epsilon e^{-K(S^\epsilon-S)},}
	
	\end{array}
	$$for all $t\in (S, S^{\epsilon})$.	We have so proved that, $\forall i=1,\dots, N,$
	$$\langle v_{i}(t),v\rangle\leq M_{S}+\epsilon-\epsilon e^{-K(S^{\epsilon}-S)}, \quad \forall t\in (S,S^{\epsilon}).$$
	Thus, we get
	\begin{equation}\label{lim}
		\max_{i=1,\dots,N} \langle v_{i}(t),v\rangle\leq M_{S}+\epsilon-\epsilon e^{-K(S^{\epsilon}-S)}, \quad \forall t\in (S,S^{\epsilon}).
	\end{equation}
	Letting $t\to S^{\epsilon-}$ in \eqref{lim}, from \eqref{teps} we have that $$M_{S}+\epsilon\leq M_{S}+\epsilon-\epsilon e^{-K(S^{\epsilon}-S)}<M_{S}+\epsilon,$$
	which is a contradiction. Thus, $S^{\epsilon}=+\infty$ and from the arbitrariness of $\epsilon$ the second inequality in \eqref{scalprdist} is fulfilled. 
	\\The other inequality in \eqref{scalprdist} can be proven using the same arguments employed in \cite{Cont}.
\end{proof}
As in Section \ref{main}, we now introduce the following generalize diameters. 
\begin{defn}
	We define, for all $n\in \mathbb{N}_{0}$,
	$$F_{n}:=\max_{i,j=1,\dots,N}\,\,\max_{s,t\in [nT-\bar\tau,nT ]}\lvert v_{i}(s)-v_{j}(t)\rvert.$$
\end{defn}
In particular, for $n=0$,
$$F_{0}=\max_{i,j=1,\dots,N}\,\,\max_{s, t\in [-\bar\tau, 0]}\lvert v_{i}(s)-v_{j}(t)\rvert.$$ 
Once we have established the existence of invariant sets (see estimate \eqref{scalprdist}), the following properties, analogous to the ones in Section \ref{main}, can be deduced for the sequence of generalized diameters $\{F_n\}_n$. Since these properties can be deduced from \eqref{scalprdist} using the same approach in \cite{CicoContPi, ContPi, ContPign,Cont}, we omit their proofs.
\begin{lem}
	For each $n\in \mathbb{N}_{0}$ and $i,j=1,\dots,N$, we get \begin{equation}\label{distdist}
		\lvert v_{i}(s)-v_{j}(t)\rvert\leq F_{n}, \quad\forall s,t\geq nT-\bar\tau.
	\end{equation}
\end{lem}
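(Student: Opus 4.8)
The plan is to mimic the proof strategy that produced the invariant-set estimate \eqref{scalprdist} for the distributed-delay system and then transfer it to the velocity differences, exactly as Lemma for the pointwise case was derived. Fix $n\in\mathbb N_0$ and $i,j\in\{1,\dots,N\}$. The inequality \eqref{distdist} to be proved is a statement about $\lvert v_i(s)-v_j(t)\rvert$ for $s,t\ge nT-\bar\tau$, so I would first recall that for any unit vector $w\in\RR^d$ one has, by \eqref{scalprdist} applied with $S=nT$,
\begin{equation*}
	\min_{l=1,\dots,N}\min_{\sigma\in[nT-\bar\tau,nT]}\langle v_l(\sigma),w\rangle\le\langle v_k(u),w\rangle\le\max_{l=1,\dots,N}\max_{\sigma\in[nT-\bar\tau,nT]}\langle v_l(\sigma),w\rangle,
\end{equation*}
for all $u\ge nT-\bar\tau$ and all $k$. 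Call these two extrema $m$ and $M$; then $M-m\le F_n$ by the very definition of $F_n$ (the max of $\langle v_l(\sigma)-v_{l'}(\sigma'),w\rangle$ over $\sigma,\sigma'\in[nT-\bar\tau,nT]$ is bounded by $\max\lvert v_l(\sigma)-v_{l'}(\sigma')\rvert\le F_n$, taking $w=(v_l(\sigma)-v_{l'}(\sigma'))/\lvert\cdot\rvert$ if the difference is nonzero).

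Next, given $s,t\ge nT-\bar\tau$ with $v_i(s)\ne v_j(t)$ (the zero case being trivial), choose the unit vector $w:=\dfrac{v_i(s)-v_j(t)}{\lvert v_i(s)-v_j(t)\rvert}$. Then
\begin{equation*}
	\lvert v_i(s)-v_j(t)\rvert=\langle v_i(s),w\rangle-\langle v_j(t),w\rangle\le M-m\le F_n,
\end{equation*}
since both $\langle v_i(s),w\rangle\le M$ and $\langle v_j(t),w\rangle\ge m$ follow from the displayed two-sided bound above (valid because $s,t\ge nT-\bar\tau$). Taking the maximum over $i,j$ and over admissible $s,t$ gives \eqref{distdist}. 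This is the same argument used for \eqref{distgen} in Section \ref{main}, only with \eqref{scalprdist} in place of \eqref{scalpr}; since \eqref{scalprdist} has already been established for the distributed-delay model, nothing further about the dynamics \eqref{hkd} is needed here.

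I do not expect a genuine obstacle: the lemma is a purely geometric consequence of the invariant-interval estimate \eqref{scalprdist}, and the only point requiring a word of care is the passage from the scalar bound to the norm bound, i.e. justifying $M-m\le F_n$ and checking that the chosen unit vector $w$ realizes the norm as a difference of two scalar projections. Both are immediate from the definition of $F_n$ as a maximum over a time window that contains $[nT-\bar\tau,nT]$. Accordingly I would state the proof in two or three lines, noting that it is identical to the corresponding step in Section \ref{main}, and — consistent with the paper's stated practice of omitting such repetitions — one could even suppress it entirely.
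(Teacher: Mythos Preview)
Your proof is correct and follows exactly the approach the paper has in mind: the paper explicitly omits the proof, stating that \eqref{distdist} ``can be deduced from \eqref{scalprdist} using the same approach'' as in the cited references, which is precisely the unit-vector/invariant-interval argument you wrote out. The only cosmetic point is that your parenthetical remark about choosing $w=(v_l(\sigma)-v_{l'}(\sigma'))/\lvert\cdot\rvert$ is unnecessary for justifying $M-m\le F_n$; that bound holds for the \emph{fixed} $w$ by Cauchy--Schwarz, and you use it correctly in the main line of the argument.
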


\begin{oss}
	Let us note that from \eqref{distdist}, in particular, it follows that
	\begin{equation}
		\label{dxdist}
		\lvert v_{i}(s)-v_{j}(t)\rvert\leq F_{0}, \quad\forall s,t\geq -\bar{\tau}.
	\end{equation}
	Moreover, it holds
	\begin{equation}\label{decdist}
		F_{n+1}\leq F_{n},\quad \forall n\in \mathbb{N}_{0}.
	\end{equation}
\end{oss} 
Also, the agents' velocities are bounded, uniformly with respect to $t$ and $i=1,\dots,N$, by a positive constant that depends on the initial data.
\begin{lem}
	For every $i=1,\dots,N$ we have that \begin{equation}\label{boundsoldist}
		\lvert v_{i}(t)\rvert\leq R_{0}^V,\quad \forall t\geq-\bar{\tau},
	\end{equation}
	where 
	\begin{equation}\label{R0}
		R_0^V:=\max_{j=1,\dots,N}\,\,\max_{s\in [-\bar\tau, 0]}\lvert v_{j}(s)\rvert.
	\end{equation}
\end{lem}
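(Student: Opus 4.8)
The statement to prove is the uniform velocity bound \eqref{boundsoldist} for the distributed–delay system \eqref{hkd}, asserting $\lvert v_i(t)\rvert \le R_0^V$ for all $t \ge -\bar\tau$, with $R_0^V = \max_j \max_{s\in[-\bar\tau,0]}\lvert v_j(s)\rvert$.

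The plan is to deduce this directly from the invariant-set estimate \eqref{scalprdist} by choosing the direction vector cleverly. First I would fix $i$ and $t \ge 0$, and take $v := v_i(t)/\lvert v_i(t)\rvert$ when $v_i(t)\neq 0$ (the case $v_i(t)=0$ being trivial). Applying \eqref{scalprdist} with $S=0$ gives $\langle v_i(t),v\rangle \le \max_{j}\max_{s\in[-\bar\tau,0]}\langle v_j(s),v\rangle$. The left-hand side is exactly $\lvert v_i(t)\rvert$, while each term on the right satisfies $\langle v_j(s),v\rangle \le \lvert v_j(s)\rvert\,\lvert v\rvert = \lvert v_j(s)\rvert \le R_0^V$ by Cauchy–Schwarz and the definition of $R_0^V$. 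This yields $\lvert v_i(t)\rvert \le R_0^V$ for all $t\ge 0$; for $t\in[-\bar\tau,0]$ the bound holds by the very definition of $R_0^V$. Combining the two ranges gives the claim for all $t\ge -\bar\tau$.

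There is essentially no obstacle here: the content has been front-loaded into the existence of invariant sets \eqref{scalprdist}, whose proof (the preceding lemma) already handled the genuine analytic difficulty — the continuity/contradiction argument controlling $\max_i\langle v_i(t),v\rangle$ past time $S$ in the presence of the distributed delay. Given that lemma, the present statement is a one-line Cauchy–Schwarz consequence, which is why the excerpt says the proof is omitted: it is identical to the pointwise-delay argument used for Lemma \ref{L3} in Section \ref{main}. If one wished to be fully self-contained one could alternatively reprove it by a direct Gronwall argument on $\frac{d}{dt}\lvert v_i(t)\rvert^2$, bounding the right-hand side of \eqref{hkd} using $\alpha\le 1$, $\psi\le K$, and $\frac{1}{h(t)}\int_{t-\tau_2(t)}^{t-\tau_1(t)}\beta(t-s)\,ds = 1$, but this is strictly more work than invoking \eqref{scalprdist} and offers no advantage.
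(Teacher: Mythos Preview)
Your argument is correct and is precisely the approach the paper has in mind: the proof is explicitly omitted there, with the remark that it follows from \eqref{scalprdist} by the same reasoning as in the pointwise-delay case, and your choice $S=0$, $v=v_i(t)/\lvert v_i(t)\rvert$ together with Cauchy--Schwarz is exactly that reasoning. A minor redundancy: since \eqref{scalprdist} with $S=0$ already covers all $t\ge -\bar\tau$, the separate treatment of $t\in[-\bar\tau,0]$ is unnecessary (though not incorrect).
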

\begin{lem}
	For all $i,j=1,\dots,N$,  unit vector $v\in \RR^{d}$ and $n\in\mathbb{N}_{0}$ we have that 
	\begin{equation}\label{4dist}
		\langle v_{i}(t)-v_{j}(t),v\rangle\leq e^{-K(t-\bar{t})}\langle v_{i}(\bar{t})-v_{j}(\bar{t}),v\rangle+(1-e^{-K(t-\bar{t})})F_{n},
	\end{equation}
	for all $t\geq \bar{t}\geq nT$. \\Moreover, for all $n\in \mathbb{N}_{0}$, we get
	\begin{equation}\label{n+1dist}
		F_{n+1}\leq e^{-KT}d_V(nT)+(1-e^{-KT})F_{n}.
	\end{equation}
\end{lem}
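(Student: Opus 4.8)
The plan is to establish \eqref{4dist} and \eqref{n+1dist} by adapting, to the distributed-delay setting, exactly the argument that in Section \ref{main} yielded \eqref{4gen} and \eqref{n+1gen}. The starting point is the invariance estimate \eqref{scalprdist}, which is now available, together with the definition of $F_n$ and the basic bound \eqref{distdist}. Fix $i,j\in\{1,\dots,N\}$, a unit vector $v\in\RR^d$, an integer $n\in\N_0$, and $\bar t\geq nT$. Differentiating $\langle v_i(t)-v_j(t),v\rangle$ along \eqref{hkd}, one obtains for a.e. $t\geq\bar t$
\[
\frac{d}{dt}\langle v_i(t)-v_j(t),v\rangle=\frac{\alpha(t)}{h(t)}\sum_{l:l\neq i}\int_{t-\tau_2(t)}^{t-\tau_1(t)}\beta(t-s)c_{il}(t;s)\langle v_l(s)-v_i(t),v\rangle\,ds
\]
\[
\hspace{3cm}+\frac{\alpha(t)}{h(t)}\sum_{l:l\neq j}\int_{t-\tau_2(t)}^{t-\tau_1(t)}\beta(t-s)c_{jl}(t;s)\langle v_j(t)-v_l(s),v\rangle\,ds.
\]
For $t\geq\bar t\geq nT$ the retarded arguments satisfy $s\in[t-\tau_2(t),t-\tau_1(t)]\subseteq[nT-\bar\tau,\infty)$, so \eqref{distdist} gives $\langle v_l(s)-v_i(t),v\rangle\leq D$ where one may take $D=F_n$ after adding and subtracting $\langle v_i(t),v\rangle$ appropriately; more precisely, write $\langle v_l(s)-v_i(t),v\rangle=\langle v_l(s)-v_i(t),v\rangle$ and bound it above by $F_n$ and below trivially, while keeping the diagonal terms $-K\langle v_i(t)-v_j(t),v\rangle$ exact. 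Using $0\le c_{il}(t;s)\le K/(N-1)$, $\alpha(t)\le 1$, the positivity of $\beta$, and the normalization $\int_{t-\tau_2(t)}^{t-\tau_1(t)}\beta(t-s)\,ds=h(t)$ (the change of variable already recorded in the previous proof), the sum over $l$ telescopes just as in the pointwise case, yielding the differential inequality
\[
\frac{d}{dt}\langle v_i(t)-v_j(t),v\rangle\leq K F_n-K\langle v_i(t)-v_j(t),v\rangle,\qquad\text{a.e. }t\geq\bar t.
\]

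Integrating this linear differential inequality by Gronwall's lemma from $\bar t$ to $t$ gives
\[
\langle v_i(t)-v_j(t),v\rangle\leq e^{-K(t-\bar t)}\langle v_i(\bar t)-v_j(\bar t),v\rangle+(1-e^{-K(t-\bar t)})F_n,
\]
which is precisely \eqref{4dist}. To obtain \eqref{n+1dist}, apply \eqref{4dist} with $\bar t=nT$ and an arbitrary $t\in[(n+1)T-\bar\tau,(n+1)T]$, for the pair $i,j$ and unit vector $v$ realizing $|v_i(t)-v_j(t)|$; bounding $\langle v_i(nT)-v_j(nT),v\rangle\leq d_V(nT)$ and taking the maximum over all such $i,j,t$ (note $t\geq (n+1)T-\bar\tau\geq nT$ since $T\geq\bar\tau$ by {\bf (PE)}, so \eqref{4dist} applies and the coefficient $e^{-K(t-nT)}$ is at most $e^{-K(T-\bar\tau)}\cdot$, but monotonicity of $x\mapsto e^{-Kx}d_V(nT)+(1-e^{-Kx})F_n$ in $x$ together with $F_{n}\ge d_V(nT)$ forces the worst case at $t=(n+1)T$) yields $F_{n+1}\leq e^{-KT}d_V(nT)+(1-e^{-KT})F_n$.

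The one genuinely new point compared with Lemma in Section \ref{main}, and the step I expect to require the most care, is handling the integral structure: one must check that after the change of variable the kernel mass is exactly $h(t)$ so that the factor $1/h(t)$ cancels and produces the clean constant $K$ in front of $F_n$ and $-K$ in front of the diagonal term, uniformly in $t$ despite $\tau_1,\tau_2,h$ all being time-dependent. This is exactly the computation carried out in the proof of the preceding lemma for the distributed model, so it transfers verbatim; once that normalization is in hand, the telescoping of the $(N-1)$ terms and the Gronwall step are identical to the pointwise case, and the $N$-independence of all constants ($K$, $T$, $\bar\tau$) is preserved. For this reason I would simply state that the proof follows the lines of the corresponding results in Section \ref{main}, using \eqref{scalprdist} in place of \eqref{scalpr} and the kernel-normalization identity, and omit the routine details.
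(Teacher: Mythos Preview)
Your sketch for \eqref{4dist} is essentially correct and matches what the paper has in mind (the paper omits the proof and refers to \cite{CicoContPi,ContPi,ContPign,Cont}): the splitting $\langle v_l(s)-v_i(t),v\rangle=(\langle v_l(s),v\rangle-M_n)+(M_n-\langle v_i(t),v\rangle)$ together with $0\le c_{il}(t;s)\le K/(N-1)$, $\alpha\le 1$, and the kernel normalization $\int_{t-\tau_2(t)}^{t-\tau_1(t)}\beta(t-s)\,ds=h(t)$ gives the clean differential inequality $\frac{d}{dt}\langle v_i-v_j,v\rangle\le KF_n-K\langle v_i-v_j,v\rangle$, and Gronwall yields \eqref{4dist}.

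There is, however, a genuine gap in your deduction of \eqref{n+1dist}. The quantity $F_{n+1}$ is a maximum of $|v_i(s)-v_j(t)|$ over pairs of times $s,t\in[(n+1)T-\bar\tau,(n+1)T]$ that may be \emph{different}, whereas \eqref{4dist} only controls $\langle v_i(t)-v_j(t),v\rangle$ at a common time. Your argument therefore bounds $\max_{t}d_V(t)$ on that interval, not $F_{n+1}$, and in general $F_{n+1}>\max_t d_V(t)$.

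The standard fix (as in \cite{Cont,ContPign}) is to keep the \emph{single-component} inequalities you implicitly derived en route to \eqref{4dist}: with $M_n=\max_l\max_{[nT-\bar\tau,nT]}\langle v_l,v\rangle$ and $m_n=\min_l\min_{[nT-\bar\tau,nT]}\langle v_l,v\rangle$, one has for $r\ge nT$
\[
\langle v_i(r),v\rangle\le M_n+e^{-K(r-nT)}\bigl(\langle v_i(nT),v\rangle-M_n\bigr),\qquad
\langle v_j(r),v\rangle\ge m_n+e^{-K(r-nT)}\bigl(\langle v_j(nT),v\rangle-m_n\bigr).
\]
Subtracting the second (at $r=t$) from the first (at $r=s$), and using that both bracketed quantities have a definite sign together with $e^{-K(s-nT)},e^{-K(t-nT)}\ge e^{-KT}$ for $s,t\le (n+1)T$, gives
\[
\langle v_i(s)-v_j(t),v\rangle\le (1-e^{-KT})(M_n-m_n)+e^{-KT}\langle v_i(nT)-v_j(nT),v\rangle\le (1-e^{-KT})F_n+e^{-KT}d_V(nT),
\]
which is \eqref{n+1dist} after maximizing over $i,j,s,t$ and the unit vector $v$. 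So the missing ingredient is precisely to work with the two one-sided Gronwall bounds separately rather than with the already-combined estimate \eqref{4dist}.
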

Now, analogously to Section \ref{main}, we need to find a bound from below on the communication rates. First of all, we prove the following result.  
\begin{lem}
	For every $i,j=1,\dots,N$, we get
	\begin{equation}\label{tassi}
		\lvert x_{i}(t)-x_{j}(s)\rvert\leq \bar\tau R_{0}^{V}+N^{X}_{0}+d_{X}(t), \quad\forall s\in [t-\tau_2(t),t-\tau_1(t)],\,\,\forall t\geq 0,
	\end{equation}
	where $R_{0}^{V}$ is the positive constant in \eqref{R0} and $N^{X}_{0}$ is the positive constant given by
	\begin{equation}\label{N0}
		N_0^{X}:=\max_{l=1,\dots,N}\,\,\max_{r,w\in [-\bar\tau,0]}\lvert x_{i}(r)-x_i(w)\rvert.
	\end{equation}
	\begin{proof}
		Given $i,j=1,\dots,N$ and $t\geq0$, for all $s\in [t-\tau_2(t),t-\tau_1(t)]$ we have
		\begin{equation}\label{splitdist}
			\begin{split}
				\lvert x_{i}(t)-x_{j}(s)\rvert&\leq \lvert x_{i}(t)-x_{j}(t)\rvert+\lvert x_{j}(t)-x_{j}(s)\rvert\\&\leq d_{X}(t)+\lvert x_{j}(t)-x_{j}(s)\rvert.
			\end{split}
		\end{equation}
		Now, we estimate $\lvert x_{j}(t)-x_{j}(s)\rvert.$ If $t-\tau_{2}(t)>0$, from \eqref{delay_distr} and \eqref{boundsoldist} we get
		$$\begin{array}{l}
			\displaystyle{\lvert x_{j}(t)-x_{j}(s)\rvert\leq \int_{s}^{t}\lvert v_{j}(r)\rvert dr\leq R^{V}_{0}(t-s)\leq R^{V}_{0}\tau_2(t)\leq \bar{\tau}R^{V}_{0}.}
		\end{array}$$
		On the other hand, if $t-\tau_{1}(t)\leq 0$, then $t\leq \bar\tau$ and 
		$$\begin{array}{l}
			\vspace{0.3cm}\displaystyle{\lvert x_{j}(t)-x_{j}(s)\rvert\leq\lvert x_j(0)-x_j(s)\rvert+ \int_{0}^{t}\lvert v_{j}(r)\rvert dr}\\
			\displaystyle{\hspace{2cm}\leq N^{X}_{0}+t R^{V}_{0}\leq N^{X}_0+\bar\tau R^{V}_{0}.}
		\end{array}$$
		Finally, if $t-\tau_2(t)\leq 0$ and $t-\tau_{1}(t)>0$, arguing as above we have
		$$\lvert x_{j}(t)-x_{j}(s)\rvert\leq \bar{\tau}R^{V}_{0},$$
		if $s>0$ and
		$$\lvert x_{j}(t)-x_{j}(s)\rvert\leq N^{X}_0+\bar\tau R^{V}_{0},$$
		if $s\leq 0$.
		Therefore, in all cases, for all $s\in [t-\tau_2(t),t-\tau_1(t)]$,
		$$\lvert x_{j}(t)-x_{j}(s)\rvert\leq N^{X}_0+\bar\tau R^{V}_{0},$$
		from which \eqref{splitdist} becomes
		$$	\lvert x_{i}(t)-x_{j}(s)\rvert\leq N^{X}_0+\bar\tau R^{V}_{0}+d_{X}(t).$$
	\end{proof}
\end{lem}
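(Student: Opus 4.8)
The plan is to reduce \eqref{tassi} to the triangle inequality together with a uniform bound on how far a single agent can travel over a time window of length at most $\bar\tau$. First I would write
$$\lvert x_{i}(t)-x_{j}(s)\rvert\leq \lvert x_{i}(t)-x_{j}(t)\rvert+\lvert x_{j}(t)-x_{j}(s)\rvert\leq d_{X}(t)+\lvert x_{j}(t)-x_{j}(s)\rvert,$$
so that the whole content of the lemma is the estimate $\lvert x_{j}(t)-x_{j}(s)\rvert\leq \bar\tau R_{0}^{V}+N_{0}^{X}$. The only structural fact about the delay window that is needed is that, since $0\leq\tau_{1}(t)<\tau_{2}(t)\leq\bar\tau$ by \eqref{delay_distr}, every admissible $s$ satisfies $t-\bar\tau\leq s\leq t$ and $t-s\leq\tau_{2}(t)\leq\bar\tau$.

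To bound $\lvert x_{j}(t)-x_{j}(s)\rvert$ I would distinguish according to the sign of $s$, recalling that $t\geq 0$ is given. If $s\geq 0$, the first equation of \eqref{hkd} gives $\frac{d}{dr}x_{j}(r)=v_{j}(r)$ on $[s,t]$, so by the uniform velocity bound \eqref{boundsoldist},
$$\lvert x_{j}(t)-x_{j}(s)\rvert\leq\int_{s}^{t}\lvert v_{j}(r)\rvert\,dr\leq R_{0}^{V}(t-s)\leq \bar\tau R_{0}^{V}.$$
If instead $s<0$, then $t\leq s+\bar\tau<\bar\tau$, and I would split at $0$:
$$\lvert x_{j}(t)-x_{j}(s)\rvert\leq \lvert x_{j}(t)-x_{j}(0)\rvert+\lvert x_{j}(0)-x_{j}(s)\rvert\leq \int_{0}^{t}\lvert v_{j}(r)\rvert\,dr+N_{0}^{X}\leq R_{0}^{V}t+N_{0}^{X}\leq \bar\tau R_{0}^{V}+N_{0}^{X},$$
where the bound $\lvert x_{j}(0)-x_{j}(s)\rvert\leq N_{0}^{X}$ comes directly from the definition \eqref{N0}, since both $0$ and $s$ lie in $[-\bar\tau,0]$. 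In all cases $\lvert x_{j}(t)-x_{j}(s)\rvert\leq \bar\tau R_{0}^{V}+N_{0}^{X}$, and substituting into the first display gives \eqref{tassi}.

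I do not expect any genuine obstacle here: the argument is a verbatim adaptation of the pointwise-delay estimate \eqref{dist}, the only new feature being that the window $[t-\tau_{2}(t),t-\tau_{1}(t)]$ must be controlled uniformly in $s$ rather than at the single point $t-\tau(t)$, which is immediate from $t-s\leq\tau_{2}(t)\leq\bar\tau$. The mild care required is the bookkeeping when the delay window straddles time $0$ (equivalently, when $s<0$), where one cannot integrate the velocity across $0$ and must instead fall back on the continuous initial data through the constant $N_{0}^{X}$.
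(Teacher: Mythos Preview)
Your proof is correct and follows essentially the same approach as the paper: triangle inequality plus a case distinction to bound $\lvert x_{j}(t)-x_{j}(s)\rvert$ by $\bar\tau R_{0}^{V}+N_{0}^{X}$. The only cosmetic difference is that the paper splits into three cases according to where the entire delay window $[t-\tau_{2}(t),t-\tau_{1}(t)]$ sits relative to $0$, whereas you split directly on the sign of the single point $s$; your organization is slightly more streamlined but the content is identical.
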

Next, we introduce the following auxiliary function.
\begin{defn}
	We define
	$$\eta(t):=\min\left\{\psi(r):r\in \left[0,\bar\tau R^{V}_{0}+N_{X}^{0}+\max_{s\in[-\bar\tau,t] }d_{X}(s)\right]\right\},$$
	for all $t\geq -\bar\tau$.
\end{defn}
\begin{oss}
	Let us note that from \eqref{tassi} it comes that
	$$\psi(\lvert x_i(t)-x_j(s)\rvert)\geq \eta(t),\quad\forall s\in [t-\tau_2(t),t-\tau_1(t)],\,\forall t\geq 0,\,\forall i,j=1,\dots,N,$$
	from which
	\begin{equation}\label{lowerbound}
		c_{ij}(t;s)\geq \frac{1}{N-1}\eta(t),\quad\forall s\in [t-\tau_2(t),t-\tau_1(t)],\,\forall t\geq 0,\,\forall i,j=1,\dots,N.
	\end{equation}
\end{oss}
Finally, analogously to Section \ref{main}, we prove the following crucial result, that will be used to prove the velocity alignment and the boundedness of the position diameters. The ideas behind the proof of this result are similar the ones used in Proposition \ref{lemma3}. However, we will show some details of its proof since the presence of the distributed time delay requires a more careful analysis.
\begin{prop}\label{lemma3dist}
	For each integer $n\geq 2$, we have that
	\begin{equation}\label{n-2dist}
		F_{n+1}\leq (1-\gamma_n)D_{n-2},
	\end{equation}
	where $\gamma_n\in (0,1)$ is the constant, independent of $N$, given by
	\begin{equation}\label{gammaind}
		\gamma_n:=e^{-KT}\min\left\{e^{-K(T+\bar\tau)},e^{-KT}\eta(nT)\tilde{\alpha}\right\}.
	\end{equation}
\end{prop}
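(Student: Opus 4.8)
The plan is to follow the proof of Proposition~\ref{lemma3} line by line, replacing the pointwise retarded term by its distributed counterpart and the weight bound \eqref{weightlowerbound} by \eqref{lowerbound}. First I would reduce \eqref{n-2dist} to the intermediate bound
\begin{equation*}
d_V(nT)\le(1-\gamma_n^{*})D_{n-2},\qquad \gamma_n^{*}:=\min\left\{e^{-K(T+\bar\tau)},\,e^{-KT}\eta(nT)\tilde\alpha\right\},
\end{equation*}
so that $\gamma_n=e^{-KT}\gamma_n^{*}$. Granting this, I would combine it with \eqref{n+1dist} and the monotonicity \eqref{decdist} (used to replace $F_n$ by the generalized velocity diameter $D_{n-2}$ appearing on the right-hand side of \eqref{n-2dist}) to obtain, exactly as at the end of Proposition~\ref{lemma3},
\begin{equation*}
F_{n+1}\le e^{-KT}d_V(nT)+(1-e^{-KT})F_n\le\big(e^{-KT}(1-\gamma_n^{*})+1-e^{-KT}\big)D_{n-2}=(1-\gamma_n)D_{n-2},
\end{equation*}
which is precisely \eqref{n-2dist}.

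To establish the intermediate bound I would fix $n\ge2$, assume $d_V(nT)>0$, pick $i,j$ with $d_V(nT)=\lvert v_i(nT)-v_j(nT)\rvert$, and set $v=(v_i(nT)-v_j(nT))/\lvert v_i(nT)-v_j(nT)\rvert$, so $d_V(nT)=\langle v_i(nT)-v_j(nT),v\rangle$, then split into two cases as before. In \emph{Case I}, if $\langle v_i(t_0)-v_j(t_0),v\rangle<0$ for some $t_0\in[nT-T-\bar\tau,nT]$, then since $T\ge\bar\tau$ gives $t_0\ge nT-2T=(n-2)T$, applying \eqref{4dist} with $t=nT$, $\bar t=t_0$ and index $n-2$ yields $d_V(nT)\le(1-e^{-K(nT-t_0)})D_{n-2}\le(1-e^{-K(T+\bar\tau)})D_{n-2}$. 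In \emph{Case II}, where $\langle v_i(t)-v_j(t),v\rangle\ge0$ on all of $[nT-T-\bar\tau,nT]$, I would introduce $M_{n-1}$ and $m_{n-1}$ as the maximum and minimum of $\langle v_l(s),v\rangle$ over $l=1,\dots,N$ and $s\in[nT-T-\bar\tau,nT-T]$, note $M_{n-1}-m_{n-1}\le F_{n-1}\le D_{n-2}$, and differentiate $\langle v_i(t)-v_j(t),v\rangle$ using the dynamics \eqref{hkd}.

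The single genuinely new point, and the main obstacle, is the distributed delay: the derivative now carries $\tfrac{1}{h(t)}\sum_{l\neq i}\alpha(t)\int_{t-\tau_2(t)}^{t-\tau_1(t)}\beta(t-s)c_{il}(t;s)\langle v_l(s)-v_i(t),v\rangle\,ds$ instead of a single retarded evaluation. The resolution is that for $t\in[nT-T,nT]$ and $s\in[t-\tau_2(t),t-\tau_1(t)]$ one has $s\ge nT-T-\bar\tau$, so by the invariance estimate \eqref{scalprdist} (equivalently \eqref{distdist}) every such $\langle v_l(s),v\rangle$ lies in $[m_{n-1},M_{n-1}]$. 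Using the change of variable $\int_{t-\tau_2(t)}^{t-\tau_1(t)}\beta(t-s)\,ds=h(t)$ from the proof of \eqref{scalprdist}, the normalization $1/h(t)$ cancels once the lower bound \eqref{lowerbound} for $c_{il}$ and $\alpha(t)\le1$ are inserted, so averaging against the probability weight $\beta(t-s)/h(t)$ reproduces verbatim the two estimates $S_1,S_2$ of Proposition~\ref{lemma3} with $\phi$ replaced by $\eta$; the cross term $\int\beta(t-s)\langle v_i(s)-v_j(s),v\rangle\,ds$ is nonnegative by the Case~II hypothesis, playing the role of $\langle v_i(t-\tau(t))-v_j(t-\tau(t)),v\rangle\ge0$ there. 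This produces the same differential inequality
\begin{equation*}
\frac{d}{dt}\langle v_i(t)-v_j(t),v\rangle\le\big(K-\eta(t)\alpha(t)\big)F_{n-1}-K\langle v_i(t)-v_j(t),v\rangle,\qquad\text{a.e. }t\in[nT-T,nT].
\end{equation*}
Gronwall's inequality on $[nT-T,nT]$, the monotonicity $\eta(s)\ge\eta(nT)$, the condition \eqref{PE} and $F_{n-1}\le D_{n-2}$ then give $d_V(nT)\le(1-e^{-KT}\eta(nT)\tilde\alpha)D_{n-2}\le(1-\gamma_n^{*})D_{n-2}$, which together with Case~I establishes the intermediate bound and hence \eqref{n-2dist}.
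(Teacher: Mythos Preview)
Your proposal is correct and follows essentially the same approach as the paper's own proof: the same reduction to the intermediate bound $d_V(nT)\le(1-\gamma_n^{*})F_{n-2}$, the same Case~I/Case~II split, the same $M_{n-1},m_{n-1}$ decomposition leading to the differential inequality $(K-\eta(t)\alpha(t))F_{n-1}-K\langle v_i(t)-v_j(t),v\rangle$, and the same use of Gronwall together with the monotonicity of $\eta$ and \textbf{(PE)}. You have correctly isolated the only genuinely new ingredient---averaging against the probability weight $\beta(t-s)/h(t)$ so that the normalization cancels via $\int_{t-\tau_2(t)}^{t-\tau_1(t)}\beta(t-s)\,ds=h(t)$, and observing that the Case~II hypothesis makes the cross term $\int\beta(t-s)\langle v_i(s)-v_j(s),v\rangle\,ds$ nonnegative---which is exactly how the paper handles it; note only that the $D_{n-2}$ appearing in the statement (and hence in your write-up) is a typographical slip for $F_{n-2}$, as the paper's own proof makes clear.
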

\begin{proof}
Let us prove that
	\begin{equation}\label{dvdist}
		d_{V}(nT)\leq (1-\gamma_n^*)F_{n-2},\quad\forall n\geq 2,
	\end{equation}
	where $$\gamma_n^*:=\min\left\{e^{-K(T+\bar\tau)},e^{-KT}\eta(nT)\tilde{\alpha}\right\}.$$
	To see this, let $n\geq2$. Then, if $d_{V}(nT)=0$, obviously \eqref{dvdist} is trivially satisfied.
	So, we can assume $d_{V}(nT)>0$. Let $i,j=1,\dots,N$ be such that $$d_{V}(nT)=\lvert v_{i}(nT)-v_{j}(nT)\rvert.$$
	Let us consider the unit vector $$v=\frac{v_{i}(nT)-v_{j}(nT)}{\lvert v_{i}(nT)-v_{j}(nT)\rvert}.$$
	Then, we can write $$d_{V}(nT)=\langle v_{i}(nT)-v_{j}(nT),v\rangle.$$
	Now, we distinguish two cases.
	\\\par\textit{Case I.} Assume that there exists $t_{0}\in [nT-T-
	\bar\tau,nT]$ such that $$\langle v_{i}(t_0)-v_{j}(t_0),v\rangle<0.$$
	Thus, using \eqref{4dist} with $t=nT$ and $\bar{t}= t_{0}$, we have 
	\begin{equation}\label{firstestimatedist}
		\begin{split}
			d_V(nT)&\leq e^{-K(nT-t_{0})}\langle v_{i}(t_0)-v_{j}(t_0),v\rangle+(1-e^{-K(nT-t_{0})})F_{n-2}\\&<(1-e^{-K(nT-t_{0})})F_{n-2}\leq (1-e^{-K(T+{\bar\tau})})F_{n-2}.
		\end{split}
	\end{equation}
	\par \textit{Case II.}  Assume that \begin{equation}\label{scalposdist}
		\langle v_{i}(t)-v_{j}(t),v\rangle\geq0,\quad \forall t\in [nT-T-\bar{\tau},nT]
	\end{equation}
	We set
	$$M_{n-1}:=\max_{l=1,\dots,N}\max_{r\in [nT-T-{\bar\tau},nT-T]}\langle v_{l}(r),v\rangle,$$
	$$m_{n-1}:=\min_{l=1,\dots,N}\min_{r\in [nT-T-{\bar\tau},nT-T]}\langle v_{l}(r),v\rangle.$$
	By definition, $M_{n-1}-m_{n-1}\leq F_{n-1}$. Also, for a.e. $t\in [nT-T,nT]$, we get
	$$\begin{array}{l}
		\vspace{0.3cm}\displaystyle{\frac{d}{dt}\langle v_{i}(t)-v_{j}(t),v\rangle=\frac{1}{h(t)}\sum_{l:l\neq i}\alpha(t)\int_{t-\tau_{2}(t)}^{t-\tau_1(t)}\beta(t-s)c_{il}(t;s)\langle v_{l}(s)-v_{i}(t),v\rangle ds}\\
		\vspace{0.3cm}\displaystyle{\hspace{2cm}+\frac{1}{h(t)}\sum_{l:l\neq j}\alpha(t)\int_{t-\tau_{2}(t)}^{t-\tau_1(t)}\beta(t-s)c_{jl}(t;s)\langle v_{j}(s)-v_{l}(t),v\rangle ds}\\
		\vspace{0.3cm}\displaystyle{\hspace{1cm}=\frac{1}{h(t)}\sum_{l:l\neq i}\alpha(t)\int_{t-\tau_{2}(t)}^{t-\tau_1(t)}\beta(t-s)c_{il}(t;s)(\langle v_{l}(s),v\rangle-M_{n-1}+M_{n-1}-\langle v_{i}(t),v\rangle) ds}\\
		\vspace{0.3cm}\displaystyle{\hspace{2cm}+\frac{1}{h(t)}\sum_{l:l\neq j}\alpha(t)\int_{t-\tau_{2}(t)}^{t-\tau_1(t)}\beta(t-s)c_{jl}(t;s)(\langle v_{j}(s),v\rangle-m_{n-1}+m_{n-1}-\langle v_{l}(t),v\rangle) ds}\\
		\displaystyle{\hspace{1cm}:=S_{1}+S_{2}.}
	\end{array}$$
Note that, for $t\in [nT-T,nT]$, we have $s\in [nT-T-\bar{\tau},nT]$, for all $s\in [t-\tau_{2}(t),t-\tau_1(t)]$, from which $$m_{n-1}\leq \langle v_{k}(s),v\rangle\leq M_{n-1},\quad \forall s\in [t-\tau_{2}(t),t-\tau_1(t)],\,\,\forall k=1,\dots,N.$$ Combining this last fact with \eqref{lowerbound}, for a.e. $t\in [nT-T,nT]$, being $\alpha(t)\leq 1$, it comes that
	$$\begin{array}{l}
		\vspace{0.3cm}\displaystyle{S_{1}\leq K(M_{n-1}-\langle v_i(t),v\rangle)+\frac{\eta(t)}{h(t)}\frac{\alpha(t)}{N-1}\sum_{l:l\neq i}\int_{t-\tau_{2}(t)}^{t-\tau_1(t)}\beta(t-s)(\langle v_{l}(s),v\rangle-M_{n-1}),}
	\end{array}$$
	and
	$$\begin{array}{l}
		\vspace{0.3cm}\displaystyle{S_{2}\leq K(\langle v_j(t),v\rangle-m_{n-1})+\frac{\eta(t)}{h(t)}\frac{\alpha(t)}{N-1}\sum_{l:l\neq j}\int_{t-\tau_{2}(t)}^{t-\tau_1(t)}\beta(t-s)(m_{n-1}-\langle v_{l}(s),v\rangle).}
	\end{array}$$
	Hence, we get
	$$\begin{array}{l}
		\vspace{0.3cm}\displaystyle{S_{1}+S_{2}\leq K(M_{n-1}-m_{n-1}-\langle v_{i}(t)-v_j(t),v\rangle)}\\
		\vspace{0.3cm}\displaystyle{\hspace{2.5cm}+\frac{\eta(t)}{h(t)}\frac{\alpha(t)}{N-1}\sum_{l:l\neq i,j}\int_{t-\tau_{2}(t)}^{t-\tau_1(t)}\beta(t-s)(\langle v_{l}(s),v\rangle-M_{n-1}+m_{n-1}-\langle v_{l}(s),v\rangle)}\\
		\vspace{0.3cm}\displaystyle{\hspace{2.5cm}-\frac{\eta(t)}{h(t)}\frac{\alpha(t)}{N-1}\int_{t-\tau_{2}(t)}^{t-\tau_1(t)}\beta(t-s)(\langle v_{i}(s)-v_j(s),v\rangle-M_{n-1}+m_{n-1})}\\
		\vspace{0.3cm}\displaystyle{\hspace{1.4cm}=(K-\eta(t)\alpha(t))(M_{n-1}-m_{n-1})-K\langle v_i(t)-v_j(t),v\rangle}\\
		\displaystyle{\hspace{2.5cm}-\frac{\eta(t)}{h(t)}\frac{\alpha(t)}{N-1}\int_{t-\tau_{2}(t)}^{t-\tau_1(t)}\beta(t-s)\langle v_{i}(s)-v_j(s),v\rangle.}
	\end{array}$$
	Now, for $t\in [nT-T,nT]$ it holds $s\in [nT-\bar{\tau},nT]$, for all $s\in [t-\tau_2(t),t-\tau_1(t)]$. Then, by virtue of \eqref{scalposdist}, $\langle v_i(s)-v_j(s),v\rangle\geq 0$, for all $s\in [t-\tau_2(t),t-\tau_1(t)]$. This fact together with $M_{n-1}-m_{n-1}\leq F_{n-1}$ implies 
	$$\begin{array}{l}
		\displaystyle{\frac{d}{dt}\langle v_i(t)-v_j(t),v\rangle\leq (K-\eta(t)\alpha(t))F_{n-1}-K\langle v_i(t)-v_j(t),v\rangle,}
	\end{array}$$
	for a.e $t\in [nT-T,nT]$. Therefore, using Gronwall's inequality with $\langle v_i(nT-T)-v_j(nT-T),v\rangle\leq M-m\leq F_{n-1},$ we can write
	$$\begin{array}{l}
		\displaystyle{\langle v_i(t)-v_j(t),v\rangle\leq \left(1-e^{-KT}\int_{nT-T}^{t}\eta(s)\alpha(s)ds\right)F_{n-1}
			,}
	\end{array}$$
	for every $t\in [nT-T,nT]$. In particular, for $t=nT$, we obtain
	$$\begin{array}{l}
		\displaystyle{d_{V}(nT)\leq \left(1-e^{-KT}\int_{nT-T}^{nT}\eta(s)\alpha(s)ds\right)F_{n-1}.}
	\end{array}$$
	Thus, using the Persistence Excitation condition {\bf (PE)} we find
	\begin{equation}\label{secondestimatedist}
		d_{V}(nT)\leq  \left(1-e^{-KT}\eta(nT)\tilde{\alpha}\right)F_{n-1},
	\end{equation}
	where in the above estimate we used the fact that $\eta(\cdot)$ is a nonincreasing function. Combining \eqref{firstestimatedist} and \eqref{secondestimatedist}, we can conclude that \eqref{dvdist} holds true.
	\\Finally, arguing as in the proof of Proposition \ref{lemma3}, from \eqref{dvdist} it comes that \eqref{n-2dist} is fulfilled.
	
\end{proof}
Once we have proven estimate \eqref{n-2dist}, we can repeat verbatim the proof of Theorem \ref{uf}, obtaining the exponential flocking for system \eqref{hkd}. Therefore, the following flocking result holds for solutions to \eqref{hkd}.
\begin{thm}
	Assume \eqref{delay_distr}. Let $\psi:\RR\rightarrow\RR$ be a positive, bounded, continuous function that satisfies
	\eqref{infint}. Assume that the weight function $\alpha:[0,+\infty)\rightarrow[0,1]$ is a
	${\mathcal L^1}$-measurable satisfying the Persistence Excitation Condition {\bf (PE)}. Let $\beta:[0,\bar{\tau}]\rightarrow(0,+\infty)$ be a continuous function and let $h:[0,+\infty)\rightarrow (0,+\infty)$ be the function defined in \eqref{h(t)}. Moreover, let $x^{0}_{i},v_{i}^{0}:[-{\bar\tau},0]\rightarrow \RR^{d}$ be continuous functions, for any $i=1,\dots,N$. 
	Then, for every solution $\{(x_{i},v_{i})\}_{i=1,\dots,N}$ to \eqref{hkd} with the initial conditions \eqref{inconddist}, there exists a positive constant $\rho^{*}$ such that \begin{equation}\label{posbounddist}
		\sup_{t\geq-{\bar\tau}}d_{X}(t)\leq \rho^{*},
	\end{equation}
	and there exists another positive constant $\nu$, independent of $N$, for which the following exponential decay estimate holds
	\begin{equation}\label{veldist}
		d_{V}(t)\leq \left(\max_{i,j=1,\dots,N}\,\,\max_{r,s\in [-\bar\tau,0]}\lvert v_{i}(r)-v_{j}(s)\rvert\right) e^{-\nu(t-3T)},\quad \forall t\geq 0,
	\end{equation}
	where $T>0$ is the constant in the Persistence Excitation Condition {\bf (PE)}.	
\end{thm}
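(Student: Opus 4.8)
The plan is to mirror the proof of Theorem \ref{uf} line by line, with the generalized diameters $F_n$ in place of $D_n$, the auxiliary function $\eta$ in place of $\phi$, the constants $R_0^V,N_0^X$ in place of $C_0^V,M_0^X$, and $\gamma_n$ in place of $C_n$. All the ingredients are already in place: the invariant-set estimate \eqref{scalprdist}, the comparison and monotonicity of the generalized diameters \eqref{distdist}--\eqref{decdist}, the uniform velocity bound \eqref{boundsoldist}, the one-step contraction \eqref{4dist}--\eqref{n+1dist}, the geometric estimate \eqref{tassi} controlling the arguments of $\psi$ by $d_X$, and, crucially, Proposition \ref{lemma3dist}, which yields the three-step contraction $F_{n+1}\le(1-\gamma_n)F_{n-2}$ for every $n\ge2$. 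Since the latter is the distributed-delay analogue of \eqref{n-2}, the rest of the argument is the Lyapunov-functional scheme of Theorem \ref{uf}.

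First I would set $\tilde\gamma_n:=\gamma_n/T$ and define the piecewise-affine envelope $\mathcal{E}(t):=F_0$ on $[-\bar\tau,2T)$ and $\mathcal{E}(t):=F_{3n}\bigl(1-\tilde\gamma_{3n+2}(t-nT)\bigr)$ on $[nT,(n+1)T)$ for $n\ge2$, exactly as in the pointwise case. Using \eqref{decdist} and \eqref{n-2dist} one checks, verbatim as there, that $\mathcal{E}$ is positive, piecewise continuous, nonincreasing on each $[nT,(n+1)T)$, and satisfies $\mathcal{E}((n+1)T)\le\lim_{t\to(n+1)T^-}\mathcal{E}(t)\le\mathcal{E}(nT)$, the analogue of \eqref{En+1}.

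Next I would introduce the Lyapunov functional
$$\mathcal{W}(t):=T\mathcal{E}(t)+\frac{e^{-KT}}{3}\int_{0}^{\bar\tau R_0^V+N_0^X+\max_{s\in[-\bar\tau,3t+2T]}d_X(s)}\min\Bigl\{e^{-K(T+\bar\tau)},\,e^{-KT}\tilde\alpha\min_{\sigma\in[0,r]}\psi(\sigma)\Bigr\}\,dr$$
and show that $\frac{d}{dt}\mathcal{W}\le0$ a.e. on each $(nT,(n+1)T)$ with $n\ge2$: the $\mathcal{E}$-term contributes $-T F_{3n}\tilde\gamma_{3n+2}=-F_{3n}\gamma_{3n+2}$, while by \eqref{derdiamX} (which holds unchanged here) the integral term contributes at most $\tfrac13\cdot 3\,e^{-KT}\min\{e^{-K(T+\bar\tau)},e^{-KT}\eta(3t+2T)\tilde\alpha\}\,d_V(3t+2T)$; using that $\eta$ is nonincreasing, so $\eta(3t+2T)\le\eta(3nT+2T)$, and that $d_V(3t+2T)\le F_{3n}$ by \eqref{distdist}, this is $\le\gamma_{3n+2}F_{3n}$ by the definition \eqref{gammaind}, and the two contributions cancel. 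Combined with the breakpoint inequalities, this gives $\mathcal{W}(t)\le\mathcal{W}(2T)$ for all $t\ge2T$; dropping the nonnegative $\mathcal{E}$-term and letting $t\to\infty$ shows that $\int_0^{\,\bar\tau R_0^V+N_0^X+\sup_s d_X(s)}\min\{\cdots\}\,dr<\infty$, so hypothesis \eqref{infint} forces $\sup_{t\ge-\bar\tau}d_X(t)\le\rho^*$ for some $\rho^*>0$, which is \eqref{posbounddist}. Finally, with $\hat\eta:=\min_{r\in[0,\rho^*]}\psi(r)>0$ one has $\eta(t)\ge\hat\eta$ for all $t$, so \eqref{n-2dist} improves to $F_{n+1}\le(1-\hat\gamma)F_{n-2}$ with $\hat\gamma:=e^{-KT}\min\{e^{-K(T+\bar\tau)},e^{-KT}\hat\eta\tilde\alpha\}\in(0,1)$ independent of $N$; by induction $F_{3n}\le(1-\hat\gamma)^nF_0=e^{-3nT\nu}F_0$ with $\nu=\tfrac{1}{3T}\ln\tfrac{1}{1-\hat\gamma}$, and for $t\in[3nT,3(n+1)T]$ the comparison \eqref{distdist} gives $d_V(t)\le F_{3n}\le e^{-\nu(t-3T)}F_0$, i.e. \eqref{veldist}.

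I expect the only genuine subtlety — already absorbed into Proposition \ref{lemma3dist} — to be the handling of the kernel $\beta$ and the integral delay when differentiating $\langle v_i-v_j,v\rangle$ and splitting the double sum. Once \eqref{n-2dist} is available, the remaining work is essentially bookkeeping: verifying that \eqref{derdiamX}, the monotonicity of $\eta$, and the comparison $d_V\le F_n$ transfer unchanged, and that $R_0^V,N_0^X,\eta,\gamma_n$ play exactly the roles of $C_0^V,M_0^X,\phi,C_n$ in the proof of Theorem \ref{uf}.
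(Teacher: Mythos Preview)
Your proposal is correct and follows exactly the paper's approach: after Proposition \ref{lemma3dist}, the paper simply states that one repeats the proof of Theorem \ref{uf} verbatim, and your outline carries out precisely that substitution scheme ($F_n$ for $D_n$, $\eta$ for $\phi$, $R_0^V,N_0^X,\gamma_n$ for $C_0^V,M_0^X,C_n$) in the Lyapunov-functional argument. One cosmetic fix: since $\eta(t)$ is a minimum of $\psi$ over $[0,\bar\tau R_0^V+N_0^X+\max_s d_X(s)]$, the lower bound $\hat\eta$ should be $\min_{r\in[0,\bar\tau R_0^V+N_0^X+\rho^*]}\psi(r)$ (equivalently, let the constant produced by \eqref{infint} bound the full upper limit of the integral, as the paper does with $d^*$ in \eqref{firstcond}).
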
	

	\bigskip
	\noindent {\bf Acknowledgements.}  The author is member of Gruppo Nazionale per l’Analisi Matematica, la Probabilità e le loro Applicazioni (GNAMPA) of the Istituto Nazionale di Alta Matematica (INdAM). The author is supported by PRIN 2022 (2022W58BJ5) {\em PDEs and optimal control methods in mean field games, population dynamics and multi-agent models}, and by INdAM GNAMPA Project {\em New trends in multiagent systems and mean field games} (CUP E5324001950001).

\end{document}